\documentclass[11pt,a4paper]{article}
\usepackage[active]{srcltx}
\usepackage{amsmath,amsfonts,amsthm,amssymb}
\usepackage{url, verbatim}
\usepackage{hyperref}
\usepackage[dvips]{graphicx}

\usepackage[a4paper,hcentering,vcentering]{geometry}
\geometry{hscale=0.7, vscale=0.8} 

\newtheorem{lemma}{Lemma}[section]
\newtheorem{corollary}[lemma]{Corollary}

\newtheorem{proposition}[lemma]{Proposition}
\newtheorem{theorem}[lemma]{Theorem}
\theoremstyle{definition}

\newtheorem{remark}[lemma]{Remark}

\def\be{\begin{eqnarray}}
\def\ee{\end{eqnarray}}
\def\beal{\begin{aligned}}
\def\enal{\end{aligned}}
\newcommand{\eps}{\varepsilon}
\newcommand{\ga}{\gamma}
\newcommand{\Ga}{\Gamma}

\newcommand{\RR}{\mathbb{R}}
\newcommand{\NN}{\mathbb{N}}

\newcommand{\TT}{\mathbb{T}}

\newcommand{\PP}{\mathcal{P}}

\newcommand{\LL}{\mathcal{L}}

\newcommand{\SSS}{\mathcal{S}}

\newcommand{\OO}{\mathcal{O}}
\newcommand{\FF}{\mathcal{F}}
\newcommand{\HH}{\mathcal{H}}

\newcommand{\ii}{^{-1}}
\newcommand{\de}{\delta}
\newcommand{\pa}{\partial}

\newcommand{\al}{\alpha}

\newcommand{\ol}{\overline}
\newcommand{\La}{\Lambda}

\newcommand{\om}{\omega}

\renewcommand{\Im}{\mathrm{Im\,}}
\newcommand{\wt}{\widetilde}

\newcommand{\Res}{\mathrm{Res}}

\newcommand{\quadr}{\mathrm{quad}}
\newcommand{\Kep}{\mathrm{Kep}}
\newcommand{\per}{\mathrm{per}}
\newcommand{\oct}{\mathrm{oct}}
\newcommand{\secu}{\mathrm{sec}}


\title{Secular instability\\
  in the spatial three-body problem}
\author{J. F{\'e}joz,\thanks{Universit{\'e} Paris-Dauphine (Ceremade) \&
    Observatoire de Paris (IMCCE), France,
    \url{jacques.fejoz@dauphine.fr}.} \enspace
  M. Guardia\thanks{Universitat Polit{\`e}cnica de Catalunya (Departament
    de Matem{\`a}tica Aplicada I), Spain, \url{marcel.guardia@upc.edu}.}}

\begin{document}
\maketitle

\emph{Abstract. } Consider the spatial three-body problem, in the
regime where one body revolves far away around the other two, in
space, the masses of the bodies being arbitrary but fixed; in this
regime, there are no resonances in mean motions. The so-called secular
dynamics governs the slow evolution of the Keplerian ellipses. We show
that it contains a horseshoe and all the chaotic dynamics which goes
along with it, corresponding to motions along which the eccentricity
of the inner ellipse undergoes large, random excursions. The proof
goes through the surprisingly explicit computation of the homoclinic
solution of the first order secular system, its complex singularities
and the Melnikov potential.


\section{Introduction}%
\label{sec:intro}%

The question of the stability of the Solar System is the oldest open
problem in Dynamical Systems. A number of works have shown striking
instability mechanisms in the three-body problems, e.g. oscillatory
orbits and close to parabolic motion~\cite{Alekseev:1969,
  Sitnikov:1960, SimoL80, LlibreS80, Guardia:2013:oscillatory,
  DelshamsKRS12}, chaotic dynamics near double or triple
collisions~\cite{Bolotin:2006, Moeckel:1989:chaotic} (see
also~\cite{Moser:1973}). But scarce mathematical mechanisms have been
described regarding more astronomical regimes, which would be
plausible for subsystems of solar or extra-solar systems. One of
them~\cite{Fejoz:2015:diffusion} shows
the existence of global
instabilities along mean motion (i.e. Keplerian) resonances.

In this paper, we focus on secular resonances. Numerical evidence has
long been suggesting that such resonances are a major source of chaos
in the Solar system~\cite{Laskar:1993:chaotic, Laskar:2008:chaotic,
  Froeschle:1989}. For example, astronomers have established that
Mercury's eccentricity is chaotic and can increase so much that
collisions with Venus or the Sun become possible, as a result from an
intricate network of secular resonances~\cite{Boue:2012:simple}. On
the other hand, that Uranus's obliquity ($97^o$) is essentially
stable, is explained, to a large extent, by the absence of any
low-order secular resonance~\cite{Boue:2010:collisionless,
  Laskar:1993:chaotic}. It is the goal of this paper to provide a
simple instability mechanism in the secular dynamics of the three-body
problem.

\section{The secular Hamiltonian in the lunar regime}
\label{sec:SecularLunar}%

Consider three point masses undergoing the Newtonian attraction in
space. Call $m_0$, $m_1$ and $m_2$ the masses and $(q_j,p_j)_{j=1,2,3}
\in (\RR^3 \times \RR^3)^3$ the Jacobi coordinates. After having
symplectically reduced the system by the symmetry of translations, the
Hamiltonian depends only on $(q_j,p_j)_{j=1,2}$ and equals
$$
F_{\Kep} + F_{\per}, \quad
\begin{cases}
  F_{\Kep} =  \sum_{j=1,2} \left( \frac{p_j^2}{2\mu_j} -
    \frac{\mu_jM_j}{\|q_j\|} \right)\\
  F_{\per} = \left( \frac{\mu_2M_2}{\|q_2\|} -
  \frac{m_0m_2}{\|q_2+\sigma_1q_1\|} -
  \frac{m_1m_2}{\|q_2-\sigma_0q_1\|}\right),
\end{cases}
$$ where the reduced masses are defined by
\begin{equation}
  \label{eq:masses}
  M_0 = m_0, \quad M_1 = m_0 + m_1, \quad \mu_1^{-1} =
  M_0^{-1} + m_1^{-1}, \quad \mu_2^{-1} = M_1^{-1} +
  m_2^{-1}
\end{equation}
and coefficients $\sigma_j$ are the barycentric weights of the inner
bodies: $\sigma_j = m_j/M_1$ ($j=0,1$). $F_{\Kep}$ is the integrable
Hamiltonian of two uncoupled Kepler problems and induces a
``Keplerian'' action of the $2$-torus, while $F_{\per}$ is the
so-called perturbing function.

In this paper, we consider the asymptotic regime (sometimes called
\emph{lunar} or \emph{hierarchical} or \emph{stellar}) where the
masses are fixed, while body $2$ revolves far away around the other
two. In particular, each of the two terms of $F_{\Kep}$ is negative,
the outer eccentricity is bounded away from $1$, and the semi major
axes $a_1$ of $q_1$ and $a_2$ of $q_2$ satisfy
$$a_1 = O(1) \ll a_2 \to \infty.$$ 
The expansion of $F_{\per}$ in the powers of $\|q_1\|/\|q_2\| \ll 1$
is
\begin{equation}
  \label{eq:Fexpansion}%
  F_{\per} = -\frac{\mu_1m_2}{\|q_2\|} \sum_{n\geq 2} \sigma_n
  P_n(\cos \zeta) \left( \frac{\|q_1\|}{\|q_2\|} \right)^n,
\end{equation}
where the $P_n$'s are the Legendre polynomials, and $\sigma_n =
\sigma_0^{n-1} + (-1)^n \sigma_1^{n-1}$, $n\geq 2$. Note that the 
perturbing function is of the order of $1/a_2^3$.

Standard normal form theory\footnote{See, in this context,
  \cite{Fejoz:2002:quasiperiodic}: mean motions do not interfere since
  they are not of the same order.} shows that, for every integer $k$,
there is a local change of coordinates $a_2^{-3}$-close to the
identity which transforms the Hamiltonian into
\begin{equation}
  \label{eq:F}
  F=F_\Kep+F_\secu+O(a_2^{-k-1}),
\end{equation}
where $F_{\secu}$ is the \emph{secular
  Hamiltonian}\footnote{\emph{Secular}, means century in Latin. This
  term governs the slow dynamics (as opposed to the fast, Keperian
  dynamics), on a long time scale, symbolically one century.}  of
order $k$
\begin{equation}
  \label{eq:Fsec}
  F_{\secu} = \int_{\TT^2} F_{\per} \, d\ell_1 \, d\ell_2 + O \left(
    \frac{1}{a_2} \right),
\end{equation}
obtained by averaging out the mean anomalies $\ell_j$ at successive
orders; $\TT^2$ is the orbit of the Keplerian action defined by
$F_{\Kep}$, parameterized by the mean anomalies $\ell_1$ and $\ell_2$
of the two planets. The secular Hamiltonian descends to the quotient
by the Keplerian action of $\TT^2$, and induces a Hamiltonian system
on the space of pairs of Keplerian ellipses with fixed semi major
axes, describing the slow evolution of the Keplerian ellipses due to
the perturbation. 

In this paper, we primarily consider the principal part of the
Hamiltonian $F$ of equation~\eqref{eq:F}, i.e.  $F_{\Kep} +
F_{\secu}$.

We establish the phase portrait of the secular system, in particular
with a (well known) hyperbolic periodic orbit located at inclined and
nearly circular ellipses. The main results of the paper are the
following. We prove that this periodic orbit posesses transversal
homoclinic points (Corollary \ref{prop:Splitting}) and that the
secular system posesses a horseshoe (Theorem \ref{thm:horseshoe}), and
therefore chaotic motions. As a by-product, we prove that the secular
system is non-integrable. The results we obtain are valid for any
value of the masses of the three bodies.

To prove these results, we take advantage of the fact that the first
order in $1/a_2$ of the secular Hamiltonian is integrable. We
explicitely determine the homoclinic solution to the hyperbolic
periodic orbit of this first order (Lemma \ref{lm:separatrix}).  Then,
we show that the Melnikov potential associated with this homoclinic
orbit has non-degenerate critical points (Proposition
\ref{prop:Melnikov}), from what it follows that the secular homoclinic
solution splits for the full secular system, as well as for the full
initial system.

The result we present in this paper is a step towards proving the
existence of unstable orbits in the three body problem in the lunar
regime, for any value of the masses of the bodies, in the sense of
Arnold diffusion. By ``unstable orbits'' here we mean orbits such that
one of the three bodies undergoes a large change in the semi major
axis of the associated osculating ellipse. This is explained more
precisely in Section \ref{sec:Diffusion}.

\subsection{The quadrupolar and octupolar Hamiltonians}
The first two terms of the expansion~\eqref{eq:Fexpansion} of
$F_{\per}$, after averaging, yield the \emph{quadrupolar} and
\emph{octupolar} Hamiltonians:\footnote{More generally, the $n$-th
  order term is called $2^n$-polar, because, in electrostatic, this
  term is the dominating term of the potential of a system of $2^n$
  well chosen charged particles.}
\begin{equation}
  \label{eq:Fav}
  F_{\secu} = -\mu_1m_2 \left(F_{\quadr} + (\sigma_0-\sigma_1) F_{\oct}
  \right) + O \left( \frac{a_1^4}{a_2^5} \right),
\end{equation}
with
\begin{equation}
  \label{eq:Fquad0}
  F_{\quadr} = \frac{{a_1}^2}{8\,{a_2}^3\,\left(1-{e_2}^2\right)^{3/2}}
  \,\left( \left(15\,{e_1}^2\,\cos ^2 {g_1} -12\,{e_1}^2 -3
    \right)\,\sin ^2i+3\, {e_1}^2+2\right)
\end{equation}
and
\begin{equation}
  \label{eq:Foct}
  F_{\oct}
  =- \frac{15}{64} \frac{a_1^3}{a_2^4} \frac{e_1e_2}{(1-e_2^2)^{5/2}}
  \times
\end{equation}
\begin{equation*}
\left\{
  \begin{split}&
    \cos {g_1}\cos {g_2}\left[
      \begin{split}&
        \frac{G_1^2}{L_1^2} \left(5\sin^2i(-7 \cos^2g_1+6) -3\right)\\&
      -35 \sin^2g_1 \sin^2 i +7
      \end{split}
    \right]\\&
    - \sin {g_1}\sin {g_2}\cos i\left[
      \begin{split}&
        \frac{G_1^2}{L_1^2} \left(
          5\sin^2i(7\cos^2g_1-4)
          +3
        \right)\\&
        +35\sin^2{g_1}\sin^2i-7
      \end{split}
    \right]
  \end{split}
\right\}.
\end{equation*} 
this is a standard computation (see~\cite{Fejoz:2002:quasiperiodic}
for the reduction to integrating trigonometric polynomials), and we
have used the following notations:
$$\begin{cases}
  e_j &=\mbox{eccentricities}\\
  g_j &=\mbox{arguments of pericenters}\\
  i &=\mbox{mutual inclination}.
\end{cases}$$ 
The Hamiltonians $F_{\quadr}$ and $F_{\oct}$ do not depend on the order $k$
of averaging in the expression~\eqref{eq:F} (i.e., due to the special
dependence of $F_{\Kep}$ and $F_{\per}$ in $L_2$, these first two terms
of the normal form are not modified by second and higher order
averaging). 

Recall that we want to analyze the secular Hamiltonian $F_\secu$ for
any value of the masses, so that parameters $m_i$, $\mu_i$ and $M_i$
are just given constants satisfying~\eqref{eq:masses}.


Let $(\ell_j,L_j,g_j,G_j,\theta_j,\Theta_j)_{j=1,2}$ denote the
Delaunay variables. Jacobi's classical elimination of the node
consists in considering a codimension-$3$ submanifold of fixed,
vertical angular momentum, and quotienting by horizontal
rotations. The reduced manifold has dimension $8$, on which the
Keplerian and eccentric variables $(\ell_j,L_j,g_j,G_j)_{j=1,2}$
induce symplectic coordinates. After averaging out the mean motions we
are left with the symplectic coordinates $(g_j,G_j)_{j=1,2}$, and the
variables $L_j$ may be treated as parameters.

The variables $L_i$ are given by $L_i=\mu_i \sqrt{M_ia_i}$. Therefore, in the
lunar regime we have $L_1\sim 1$ and $L_2\gg 1$. Recall that the eccentricity
of the ellipse is given in terms of the Delaunay coordinates by
\begin{equation}\label{def:eccentricity}
 e_i=\sqrt{1-\frac{G_i^2}{L_i^2}}.
\end{equation}
Since we want the outer body to describe non-degenerate ellipses, we
even assume $G_2\sim L_2$. Since we are doing a perturbative analysis
in $L_2^{-1}$, we define the new variable
\[
 \Ga=C-G_2,\,\, \text{where }C=\de L_2\text{ and }\de>0 \text{ is a fixed
constant}.
\]
The coordinates
\begin{equation}\label{def:CoordinatesInGamma}
(g_1,G_1,\gamma,\Gamma) = (g_1,G_1,-g_2,C-G_2), 
\end{equation}
are symplectic; we  also call them Delaunay coordinates (as
opposed to some radically different coordinates used later). Now these
variables are bounded as $a_2, L_2 \to \infty$ (recall that $C \sim
G_2 \to \infty$). At the first order in $L_2^{-1}$, the mutual
inclination $i$ satisfies
\begin{equation}
  \label{eq:cosi}
  \cos^2 i = \frac{(C^2 - G_1^2 - G_2^2)^2}{4G_1^2G_2^2} =
  \frac{(C^2 - G_1^2 - (C-\Gamma)^2)^2}{4G_1^2 (C-\Gamma)^2} =
  \frac{\Gamma^2}{G_1^2}+O\left(L_2^{-1}\right).
\end{equation}
Now we express the secular Hamiltonian in these new variables and
expand it in inverse powers of $L_2$.  Recall that variables $L_j$ are
parameters, as well as the norm $C$ of the angular momentum.

\begin{lemma}\label{lemma:SecularExpansion}
The secular Hamiltonian \eqref{eq:Fav} has the form
\begin{equation}\label{def:HSecExpanded}
\begin{split}
  F_{\secu} = \al_0+\al_1 L_2^{-6}&\Big( H_0 + L_2^{-1} H_1 +
  (\sigma_0-\sigma_1) L_2^{-2} H_2 +\OO(L_2^{-3}) \Big). 
\end{split}
\end{equation}
with
\begin{equation}\label{def:FirstOrderQuadrupolar}
  H_0=\left( 1 - \frac{G_1^2}{L_1^2} \right)
  \left[ 2 - 5 \left( 1 - \frac{\Gamma^2}{G_1^2} \right) \sin^2 g_1 
  \right] - \frac{\Gamma^2}{L_1^2}\\
\end{equation}
\end{lemma}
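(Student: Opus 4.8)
The plan is to substitute the symplectic change of coordinates \eqref{def:CoordinatesInGamma} into the closed formulas \eqref{eq:Fquad0}--\eqref{eq:Foct} for $F_{\quadr}$ and $F_{\oct}$, to trade the semi-major axes for the Delaunay actions, and then to Taylor-expand in the small parameter $L_2^{-1}$, collecting terms order by order.

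First I would eliminate the semi-major axes. From $L_j=\mu_j\sqrt{M_ja_j}$ one gets $a_j=L_j^2/(\mu_j^2M_j)$, so $a_1=L_1^2/(\mu_1^2M_1)$ is a parameter bounded as $L_2\to\infty$, while $a_2^{-3}=(\mu_2^2M_2)^3L_2^{-6}$ and $a_2^{-4}=(\mu_2^2M_2)^4L_2^{-8}$. Hence $F_{\quadr}$ is a fixed constant times $L_2^{-6}$ times a function bounded as $L_2\to\infty$, the term $(\sigma_0-\sigma_1)F_{\oct}$ is a fixed constant times $(\sigma_0-\sigma_1)\,L_2^{-6}\cdot L_2^{-2}$ times a bounded function, and the remainder $\OO(a_1^4/a_2^5)$ in \eqref{eq:Fav} is $L_2^{-6}\OO(L_2^{-4})$. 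This already explains the global factor $L_2^{-6}$ in \eqref{def:HSecExpanded}, places the octupolar contribution at order $L_2^{-2}$ inside the bracket, and accounts for the error $\OO(L_2^{-3})$. It remains to express the orbital elements in $(g_1,G_1,\gamma,\Gamma)$: $e_1^2=1-G_1^2/L_1^2$ by \eqref{def:eccentricity}; $1-e_2^2=G_2^2/L_2^2=(C-\Gamma)^2/L_2^2=(\delta-\Gamma/L_2)^2$, so that $(1-e_2^2)^{-3/2}=\delta^{-3}(1-\Gamma/(\delta L_2))^{-3}$ and $(1-e_2^2)^{-5/2}=\delta^{-5}(1-\Gamma/(\delta L_2))^{-5}$ are power series converging in $L_2^{-1}$ for $L_2$ large; and $\cos^2 i$, $\cos i$, $\sin^2 i=1-\cos^2 i$ from \eqref{eq:cosi}, which I would carry one order further than written there.

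Substituting all of this into \eqref{eq:Fquad0}--\eqref{eq:Foct} and grouping by powers of $L_2^{-1}$ puts $F_{\secu}$ in the form \eqref{def:HSecExpanded}: the part of the result independent of $(g_1,G_1,\gamma)$ is collected into $\al_0$; a fixed multiplicative constant (built from the masses, $a_1$ and $\delta$) is pulled out to form $\al_1$; and, after factoring the remaining $L_2^{-6}$, what is left is $H_0+L_2^{-1}H_1+(\sigma_0-\sigma_1)L_2^{-2}H_2+\OO(L_2^{-3})$, with $H_0$ the leading (quadrupolar) coefficient, $H_1$ the next — produced by the $\Gamma/L_2$-corrections of $(1-e_2^2)^{-3/2}$ and of $\cos^2 i$ — and $H_2$ the leading term of the octupolar part, which carries the factor $\sigma_0-\sigma_1$. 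Note that $H_0$ turns out not to depend on $\gamma$, so that the first-order system is integrable, $\Gamma$ being a first integral. It remains only to put the leading quadrupolar coefficient in closed form: it is the bracket of \eqref{eq:Fquad0} with $\sin^2 i$ replaced by its leading value $1-\Gamma^2/G_1^2$, and a short computation using $e_1^2=1-G_1^2/L_1^2$ together with $(\Gamma^2/G_1^2)(G_1^2/L_1^2)=\Gamma^2/L_1^2$ yields
\[
\bigl(15e_1^2\cos^2 g_1-12e_1^2-3\bigr)\bigl(1-\Gamma^2/G_1^2\bigr)+3e_1^2+2 \;=\; 3H_0+6\,\Gamma^2/L_1^2-1,
\]
with $H_0$ as in \eqref{def:FirstOrderQuadrupolar}; the $(g_1,G_1,\gamma)$-independent remainder $6\Gamma^2/L_1^2-1$ goes into $\al_0$ and the factor $3$ into $\al_1$.

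I expect the only genuine obstacle to be the volume of algebra, concentrated in the octupolar term \eqref{eq:Foct}: it is long, its inclination dependence enters through both $\cos i$ and $\sin^2 i$, and the substitution $g_2=-\gamma$ flips the sign of $\sin g_2$; careful bookkeeping of the orders in $L_2^{-1}$, in particular keeping the $\Gamma/L_2$-corrections of \eqref{eq:cosi} rather than truncating them too early, is where attention is needed. The structural claim \eqref{def:HSecExpanded} and the closed form of $H_0$, by contrast, depend only on the leading order and are a short computation.
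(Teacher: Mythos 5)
Your proposal is correct and follows essentially the same route as the paper: substitute the coordinates \eqref{def:CoordinatesInGamma} into \eqref{eq:Fquad0}--\eqref{eq:Foct}, trade $a_j$ for $L_j$, expand $(1-e_2^2)^{-3/2}$ and $\cos^2 i$ in powers of $L_2^{-1}$, and collect orders; your closed-form identity $\bigl(15e_1^2\cos^2 g_1-12e_1^2-3\bigr)\bigl(1-\Gamma^2/G_1^2\bigr)+3e_1^2+2=3H_0+6\Gamma^2/L_1^2-1$ indeed checks out using $1-e_1^2=G_1^2/L_1^2$, and is if anything a cleaner bookkeeping of the constants than the paper's displayed chain of equalities. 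One small quibble in the phrasing: the $\OO(a_1^4/a_2^5)$ remainder of \eqref{eq:Fav} is $L_2^{-6}\OO(L_2^{-4})$ inside the bracket, so it alone does not ``account for'' the $\OO(L_2^{-3})$; that order is populated by the second-order $L_2^{-1}$-corrections of the quadrupolar term and the first-order correction of the octupolar term (and, strictly, the quadrupolar side also produces a $\gamma$-independent $L_2^{-2}$ contribution not written in \eqref{def:HSecExpanded} — the paper silently absorbs this too, and it is harmless for the Melnikov analysis since it carries no $\gamma$-harmonic).
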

In the coordinates $(g_1,G_1,\gamma,\Gamma)$, $F_{\quadr}$ and hence
$H_0$ and $H_1$ do not depend on $\gamma$. We do not compute $H_1$
explicitly, here. The reason is that $H_1$ does not break the
integrability of $H_0$ and therefore does not play any role in the
Melnikov analysis. In contrast, the Hamiltonian $H_2$ does break
integrability (as it will follow from our study), and is computed in
Section~\ref{sec:Melnikov}.

\begin{proof}[Proof of Lemma \ref{lemma:SecularExpansion}]
Formula \eqref{def:eccentricity} implies
\[\frac{1}{G_1^2}  = \frac{e_1^2}{G_1^2}+\frac{1}{L_1^2}.\]
Hence, using \eqref{eq:cosi},
\begin{equation}
  \label{eq:sini}
  \sin^2 i = 1 - \frac{\Gamma^2}{G_1^2}+O\left(L_2^{-1}\right)= 1-\Gamma^2 
  \frac{e_1^2}{G_1^2}-\frac{\Gamma^2}{L_1^2}+O\left(L_2^{-1}\right).
\end{equation}
Besides, 
\[e_2 = \sqrt{1- \frac{G_2^2}{L_2^2}}= \sqrt{ 1 -
  \frac{C^2}{L_1^2}}+O\left(L_2^{-1}\right)\]
is a first integral of the first order approximation of $F_{\quadr}$. 
Hence, expanding in powers of $L_2^{-1}$, $F_{\quadr}$ can be written as 
\[
 F_{\quadr}(g_1,G_1,\Gamma)=\al_1 L_2^{-6} (H_0(g_1,G_1,\Gamma)+2)+O(L_2^{-1})
\]
with
\[
 \al_1=-\frac{L_1^4
M_2\mu^6 m_2}{8M_1^2\mu_1^3(1-e_2^2)^{3/2}}
\]
and 
\begin{align*}
  H_0 &= \left.\left(5e_1^2\cos^2 g_1 - 4 e_1^2 - 1\right) \sin^2
  i +
  e_1^2\right|_{L_2^{-1}=0} \quad \mbox{(using \eqref{eq:Fquad0})} \\
  &= e_1^2 \left[ \left( 5 \cos^2 g_1 - 4 \right) \left( 1 -
      \frac{\Gamma^2}{G_1^2} \right) + \frac{\Gamma^2}{G_1^2} + 1
  \right] - \frac{\Gamma^2}{L_1^2}
  \quad \mbox{(using \eqref{eq:cosi} and \eqref{eq:sini})}\\
  &= e_1^2 \left[ 2 - 5 \left( 1 - \frac{\Gamma^2}{G_1^2}
    \right) \sin^2 g_1 \right] - \frac{\Gamma^2}{L_1^2}.
\end{align*}
Factorization~\eqref{def:FirstOrderQuadrupolar} follows. 
\end{proof}

\section{The quadrupolar phase portrait}
\label{sec:Fquad}%

According to~\eqref{eq:Fquad0}, the quadrupolar Hamiltonian
$F_{\quadr}$ and thus $H_0$ do not to depend on $\gamma$. This happy
coincidence (which does not repeat itself for the next order term
$F_{\oct}$) makes $F_{\quadr}$ integrable. This has been extensively
used (see~\cite{Zhao:2014:collisions} and references therein). Here,
we may thus think of $\Gamma$ as a parameter.

Complex singularities of solutions of $F_{\quadr}$ are hard to
determine in general. In our regime, the first order of the quadrupolar 
Hamiltonian, $H_0$, can
be factorized as described in
equation~\eqref{def:FirstOrderQuadrupolar} (up to the additive
constant $-\Gamma^2/L_1^2$), which  dramatically simplifies our
study.


The Hamiltonian \(H_0\) is analytic on a neighborhood of the
cylinder \[(g_1,G_1)\in \TT^1 \times \, ]0,L_1[\] in \(\TT^1 \times
\RR\). Since it is $\pi$-periodic with respect to $g_1$, we may focus
on the domain $0 \leq g_1 \leq \pi$, $0 \leq G_1 \leq L_1$, keeping in
mind that the Delaunay coordinates blow up circular ellipses ($G_1 =
L_1$).

Hamilton's vector field is
\[
\begin{cases}
  \dot g_1
  = \frac{2G_1}{L_1^2} \left[ 5 \left( 1 - \frac{\Gamma^2}{G_1^2}
    \right) \sin^2 g_1 - 2 \right] - 10 \left( 1 - \frac{G_1^2}{L_1^2}
  \right) \frac{\Gamma^2}{G_1^3} \sin^2 g_1\\  
  \dot G_1 =  5\left(1-\frac{G_1^2}{L_1^2}\right)
  \left(1-\frac{\Gamma^2}{G_1^2}\right)\sin 2g_1.
  \end{cases}
\]
The second component $\dot G_1$ vanishes if and only if (assuming $G_1
>0$)
\[G_1 \in \{|\Gamma|, L_1\} \quad \mbox{or} \quad g_1 = 0
\pmod{\pi/2}.\] If $G_1 = |\Gamma|$, $\dot g_1$ cannot
vanish. Moreover, $G_1 \in \, ]0,L_1[\,$. Let
\begin{equation}
  \label{eq:Gammat}%
  \tilde\Gamma = \sqrt{ 1 - \frac{\Gamma^2}{L_1^2} } \in \, ]0,1[.
\end{equation}
The equilibrium points thus satisfy the following equations:
\begin{itemize}
\item If $G_1=L_1$ (circular ellipse),
  \begin{equation}
    \label{eq:hyperbolic}%
    \sin^2 g_1 = \frac{2}{5 \tilde\Gamma^2}.
  \end{equation}
  Assuming that
  \begin{equation}
    \label{cond:beta-upperbound}%
    |\Gamma| < L_1\sqrt{\frac{3}{5}},
  \end{equation}
  whence \(\tilde\Gamma^2 \geq \frac{2}{5}\) (this condition is
  satisfied if the inner ellipse has a large eccentricity or an
  inclination close to $\pi/2$), within the $g_1$-interval $[0,\pi[$,
  there are two solutions $g_1^{\min} \in ]0,\pi/2[$ and $g_1^{\max}$
  symmetric with respect to $\pi/2$, which are located on the
  energy level $H_0=-\Ga^2/L_1^2$.

  In variables $(x,y)$ such that
  \[\sin^2 g_1 = \frac{2}{5 \tilde\Gamma^2} (1+x) \quad \mbox{and} \quad
  G_1 = L_1(1-y)\] (which are local coordinates in the neighborhood of
  either one of the two above singularities), we have
  \[H_0+\Ga^2/L_1^2= 4xy + O_3(x,y);\] 
  thus the two singularities are hyperbolic. 
\item If $g_1 = 0 \pmod{\pi}$, $G_1=0$ (collision ellipse) is an
  equilibrium point of the regularized Hamiltonian vector field $G_1^3
  X_{F_{\quadr}^0}$. 
\item If $g_1=\pi/2 \pmod{\pi}$, 
  \[3G_1^4 - 10 \Gamma^2 G_1^2 + 5 L_1^2\Gamma^2 = 0\]
  (two pairs of opposite real solutions).
\end{itemize}

\begin{figure}[h]
  \centering
  \includegraphics[scale=0.7]{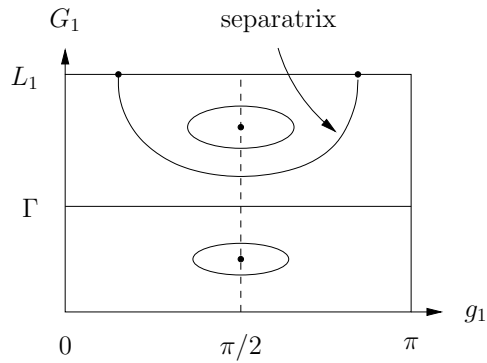}
  \caption{Phase portrait of the reduced quadrupolar dynamics}
  \label{fig:portrait}
\end{figure}

We  focus on the first two hyperbolic singularities, with
$G_1=L_1$, henceforth assuming that
condition~\eqref{cond:beta-upperbound} is satisfied, with, say,
\begin{equation}\label{def:GammaPos}
\Gamma >0.
\end{equation}

Lifted to the full secular phase space, these critical points become
the periodic orbits
\begin{equation}\label{def:PeriodicH0}
Z^0_{\min,\max}(t, \ga_0)= \left(g_1^{\min,\max}, L_1,
\ga^0+\ga^1(t),\Ga\right),
\end{equation}
where $\ga^0$ is just the initial condition for the variable $\ga$ and 
\begin{equation}\label{def:gamma1}
\ga^1(t)= -2\frac{\Ga}{L_1^2}t.
\end{equation}
The whole circle $G_1=L_1$ corresponds to circular motion. Along this
circle the Delaunay variables are singular. Thus, in a neighborhood of
circular motion it is more reliable to use the Poincar{\'e} variables
\begin{equation}\label{def:PoincareVariables}
\begin{cases}
 \xi&=\sqrt{2(L_1-G_1)}\cos g_1\\
 \eta&=-\sqrt{2(L_1-G_1)}\sin g_1.
\end{cases}
\end{equation}
Those secular coordinates are symplectic (a well known fact). The
Poincar{\'e} variables transform the Hamiltonian $H_0$ into
\begin{equation}\label{def:H_0Poincare}
 \wt H_0(\xi,\eta)= - \frac{\Gamma^2}{L_1^2}+\frac{1}{L_1}\left[2\xi^2-\left(
3-5\frac{\Ga^2}{L_1^2}\right)\eta^2\right]+\OO_2\left(\xi^2+\eta^2\right)
\end{equation}
and the line segment $\{G_1=L_1\}$ blows down to a single hyperbolic
periodic orbit
\[(\xi,\eta,\ga,\Ga)=\left(0,0,\ga^0+\ga^1(t),\Ga\right)\]
in the secular
space. 

As it has occured above, in the $(g_1,G_1)$-plane there are two
hyperbolic fixed points for $g_1\in [0,\pi]$ and two more for $g_1\in
[\pi,2\pi]$. On the line $G_1=L_1$, there are heteroclinic connections
between them. All these objects blow down to the critical point
$(\xi,\eta)=(0,0)$ in Poincar{\'e} variables.  Moreover, there are two
separatrix connections in the region $G_1<L_1$, one between the
critical points with $g_1\in [0,\pi]$ and the other one between the
critical points with $g_1\in [\pi,2\pi]$. Even if $H_0$ is well
defined for $G_1>L_1$, the corresponding solutions are spurious. In
the secular space, we obtain a hyperbolic critical point at
$(\xi,\eta)=(0,0)$ with two homoclinic orbits forming a figure eight.

The main technical goal of this paper is to show that these
separatrices split when one considers the complete secular Hamiltonian
of arbitrary order $k$. We focus on the homoclinic on which $g_1\in
[0,\pi]$. We use both Delaunay and Poincar{\'e} variables, but need to
keep in mind that the Delaunay variables are not defined on the
secular space proper along circular inner ellipses ($G_1=L_1$). Note
the same orbit is homoclinic for the secular Hamiltonian
\eqref{def:H_0Poincare} and heteroclinic after blow up (see
\eqref{def:FirstOrderQuadrupolar}).

The expression of the energy~\eqref{def:FirstOrderQuadrupolar} yields
the following characterization of this orbit, where the interval \(]
g_1^{\min}, g_1^{\max}[ \subset \, ]0, \pi [\) is defined by the
inequality (recall~\eqref{eq:Gammat})
\begin{equation}
  \label{eq:g1-interval}
  \sin^2 g_1 > \frac{2}{5\tilde\Gamma^2}
\end{equation}
and
\begin{equation}
  \chi = \sqrt{\frac{2}{3}}
  \frac{|\Gamma|}{L_1}\frac{1}{\sqrt{1-\frac{5}{3}
      \frac{\Gamma^2}{L_1^2}}} >0
  \quad \mbox{and} \quad 
  A_2 =  30 \sqrt{\frac{10}{3}}
  \frac{|\Gamma|^3}{L_1} \sqrt{1 - \frac{5}{3}
    \frac{\Gamma^2}{L_1^2}}.
   \label{eq:A2-chi}%
\end{equation}

\begin{lemma}
  \label{lm:separatrix}%
  Assume conditions
  \eqref{cond:beta-upperbound}--\eqref{def:GammaPos}.  The Hamiltonian
  $H_0$ given in \eqref{def:FirstOrderQuadrupolar} has a heteroclinic
  solution which tends to the periodic orbits $Z_{\min}^0$ and
  $Z_{\max}^0$ in \eqref{def:PeriodicH0} in backward and forward times
  respectively. Its orbit is defined by the equation
  \begin{equation}
    \label{eq:separatrix}
    \left(1-\frac{\Gamma^2}{G_1^2}\right)\sin^2 g_1=\frac{2}{5},
  \end{equation}
  with range \(g_1 \in \, ] g_1^{\min}, g_1^{\max}[ \subset \, ]0, \pi
  [\).
Its time parameterization is given by
\begin{equation}\label{def:separatrix}
Z^0(t,\gamma^0)=( 
g_1^h(t),G_1^h(t),\gamma^h(t),\Gamma)
\end{equation}
with
\begin{align}
  \cos g_1^h(t) &= - \sqrt{\frac{3}{5}} \frac{\sinh A_2t}{\sqrt{\chi^2
      + (1+\chi^2) \sinh^2 A_2t}}  \label{eq:cosg1-t}\\
  G_1^h(t) &= |\Gamma| \sqrt{\frac{5}{3}} \sqrt{1 + \frac{3}{5}
    \frac{L_1^2}{\Gamma^2} \sinh^2 A_2t} \, \left(\cosh A_2t
  \right)^{-1}\label{eq:G2}\\
  \gamma^h(t)&=  \gamma^0 + \gamma^1(t) +\gamma^2(t),
  \quad
  \gamma^2(t) = \arctan \left( \chi^{-1} \tanh A_2t \right),
  \label{eq:gamma-t}%
\end{align}
where $\gamma^0\in\TT$ is the arbitrary value  of the
$\gamma$ coordinate at initial time, $\gamma^1$ has been introduced in
\eqref{def:gamma1}.
\end{lemma}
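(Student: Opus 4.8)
The plan is to verify directly that the curve defined by~\eqref{eq:separatrix} is the energy level $H_0 = -\Gamma^2/L_1^2$ restricted to the region $G_1 < L_1$, and then to integrate the Hamiltonian vector field along it. First I would substitute the relation $\left(1-\Gamma^2/G_1^2\right)\sin^2 g_1 = 2/5$ into the factored expression~\eqref{def:FirstOrderQuadrupolar} for $H_0$: the bracket $\left[2 - 5\left(1-\Gamma^2/G_1^2\right)\sin^2 g_1\right]$ becomes $2 - 5\cdot\tfrac{2}{5} = 0$, so $H_0 = -\Gamma^2/L_1^2$ identically on this curve, which is precisely the energy of the hyperbolic periodic orbits $Z^0_{\min,\max}$ (the equilibria with $G_1 = L_1$ lie on this level by~\eqref{eq:hyperbolic}, since at $G_1=L_1$ the constraint~\eqref{eq:separatrix} reads $\tilde\Gamma^2\sin^2 g_1 = 2/5$). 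So the curve~\eqref{eq:separatrix} is contained in a single energy level connecting the two equilibria; since $\dot G_1 = 5(1-G_1^2/L_1^2)(1-\Gamma^2/G_1^2)\sin 2g_1$ does not vanish on the open arc $g_1 \in \,]g_1^{\min},g_1^{\max}[$ with $|\Gamma| < G_1 < L_1$, the arc is a single heteroclinic trajectory, asymptotic to $G_1 = L_1$ (hence to $Z^0_{\min}$ and $Z^0_{\max}$) as $t \to \mp\infty$.

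Next I would establish the time parameterization. From the constraint one can solve $\sin^2 g_1 = \frac{2}{5}\,\frac{G_1^2}{G_1^2 - \Gamma^2}$, equivalently $\cos^2 g_1 = \frac{5G_1^2 - 2\Gamma^2 - 3G_1^2}{5(G_1^2-\Gamma^2)}$, wait — more cleanly $\cos^2 g_1 = 1 - \sin^2 g_1 = \frac{3G_1^2 - 5\Gamma^2}{5(G_1^2 - \Gamma^2)}$, so that the trajectory is parameterized by $G_1$ alone (with $g_1$ a function of $G_1$ along each monotone branch). I would then reduce the dynamics to a single first-order ODE: using $\dot G_1$ above together with the constraint to eliminate $g_1$, one gets $\dot G_1 = \pm\,5\left(1 - \tfrac{G_1^2}{L_1^2}\right)\left(1 - \tfrac{\Gamma^2}{G_1^2}\right)\cdot 2\sin g_1\cos g_1$, and substituting the expressions for $\sin^2 g_1,\cos^2 g_1$ yields a separable equation in $G_1$ of the form $\dot G_1 = c\,(L_1^2 - G_1^2)\sqrt{3G_1^2 - 5\Gamma^2}\,/(L_1^2 G_1)$ up to constants. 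The natural substitution is to introduce a variable conjugate to the $\sinh A_2 t$ appearing in~\eqref{eq:cosg1-t}: set $u = \sinh A_2 t$, guess $\cos g_1^h(t)$ as in~\eqref{eq:cosg1-t}, verify it solves $\dot g_1 = $ (first component of the vector field) — equivalently check the consistency of~\eqref{eq:cosg1-t} and~\eqref{eq:G2} with the constraint~\eqref{eq:separatrix} and with one of the two scalar ODEs — and confirm that $A_2$ and $\chi$ from~\eqref{eq:A2-chi} are exactly the constants that make this work, i.e. $A_2$ is the hyperbolicity exponent computed from the linearization $H_0 + \Gamma^2/L_1^2 = 4xy + \cdots$ rescaled to the $(g_1,G_1)$ units. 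Finally, for $\gamma^h(t)$, since $\dot\gamma = \partial H_0/\partial\Gamma$ (note $\gamma$ is cyclic for $H_0$, so this is just quadrature), I would split $\partial H_0/\partial\Gamma$ into the constant part $-2\Gamma/L_1^2$ (giving $\gamma^1(t)$) plus the remaining part coming from $\partial/\partial\Gamma$ of the $\Gamma^2/G_1^2$ terms evaluated along the orbit; integrating the latter, using $G_1^h(t)$ from~\eqref{eq:G2}, should collapse — after the substitution $u=\tanh A_2 t$ — to $\arctan(\chi^{-1}\tanh A_2 t)$, which is $\gamma^2(t)$.

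The main obstacle I expect is purely computational: confirming that the explicit hyperbolic-trigonometric ansatz~\eqref{eq:cosg1-t}--\eqref{eq:G2} satisfies the ODE with exactly the stated constants $\chi, A_2$, rather than some rescaling of them. This requires carefully tracking the factors of $L_1$, $\Gamma$, and the numerical constant $\tfrac{2}{5}$ through the separation-of-variables integral $\int \frac{L_1^2 G_1\, dG_1}{(L_1^2 - G_1^2)\sqrt{3G_1^2 - 5\Gamma^2}} = \pm 5t + \text{const}$; the partial-fraction/trigonometric-substitution bookkeeping there is where sign errors and constant mismatches are most likely to creep in. A secondary subtlety is the behavior at the endpoints: one must check that as $G_1 \to L_1^-$ the parameterization indeed reaches the equilibria $g_1^{\min}, g_1^{\max}$ (not $G_1 = |\Gamma|$, where $\dot g_1 \neq 0$), and that the limits as $t \to \pm\infty$ of~\eqref{eq:cosg1-t}, namely $\cos g_1^h \to \mp\sqrt{3/5}/\sqrt{1+\chi^2} = \mp\cos g_1^{\min}$ via~\eqref{eq:hyperbolic}, match, so that the solution is genuinely heteroclinic and has the claimed forward/backward limits; this is a short limit computation once the parameterization is in hand. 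The quadrature for $\gamma^2$ then follows by the same substitution with no new difficulty beyond recognizing the $\arctan$ antiderivative.
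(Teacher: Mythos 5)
Your plan tracks the paper's proof closely: establish that the level set $H_0 = -\Gamma^2/L_1^2$ with $G_1<L_1$ is the heteroclinic orbit, use the constraint~\eqref{eq:separatrix} to eliminate one variable and obtain a scalar separable ODE, integrate (or verify the hyperbolic ansatz), and finish $\gamma^h$ by quadrature of $\dot\gamma = \partial_\Gamma H_0$. The paper eliminates $G_1$ via~\eqref{eq:G1-g1} and reduces to the closed ODE~\eqref{eq:g1} for $g_1$ and then integrates by the substitution $\sqrt{5/3}\cos g_1 = \cos x$; you instead eliminate $g_1$ and reduce to an ODE for $G_1$ (or fall back to directly verifying the ansatz~\eqref{eq:cosg1-t}). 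These are variants of the same computation.

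There is, however, one genuine slip in your argument that the arc is a single trajectory. You assert that $\dot G_1 = 5\bigl(1-G_1^2/L_1^2\bigr)\bigl(1-\Gamma^2/G_1^2\bigr)\sin 2g_1$ does not vanish on the open arc, but it does vanish at $g_1 = \pi/2$, which lies in the interior of $]g_1^{\min},g_1^{\max}[$; this is exactly the point where $G_1$ attains its minimum $|\Gamma|\sqrt{5/3}$ on the separatrix, so $G_1$ is \emph{not} monotone along the orbit. The nonvanishing fact you actually need (and which the paper uses) is $\dot g_1 = 10\Gamma^2\bigl(1-G_1^2/L_1^2\bigr)\sin^2 g_1\,/\,G_1^3 > 0$, which holds because $\Gamma\neq 0$, $|\Gamma|<G_1<L_1$, and $\sin g_1\neq 0$ on $]0,\pi[$. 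This same point is the reason the paper chooses $\cos g_1$ (monotone on the arc) rather than $G_1$ as the reduction coordinate; your plan of parameterizing ``by $G_1$ alone'' would need to glue two monotone branches at $g_1=\pi/2$, a complication you sidestep only by reverting to the verify-the-ansatz route. With that one correction, your argument is sound and essentially coincides with the paper's.
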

In the $\gamma$-component of the separatrix, the angle $-\gamma^0$ is
the (arbitrary) argument of the outer pericenter at the symmetry
center of the separatrix $g_1 = \pi/2$ (recall the change of coordinates 
\eqref{def:CoordinatesInGamma}). The term $\gamma^1(t)$ is the
rotational part and the term $\gamma^2(t)$ is the transient part of
$\gamma^h(t)$.

\begin{remark}\label{rmk:HomoclinicLevelEnergy}
 The assumed condition \eqref{cond:beta-upperbound} implies that $\pa_\Ga
H_0\neq 0$ in a neighborhood of the heteroclinic orbit. Therefore, one can
rephrase Lemma \ref{lm:separatrix} as the
existence of a heteroclinic orbit in each energy level $H_0(g_1,G_1,\ga,\Ga)=h$
for  any $h<0$ (recall that by
the expression of $H_0$ in \eqref{def:FirstOrderQuadrupolar} the heteroclinic
orbits always have negative energy).
\end{remark}



\begin{proof}
On the separatrix, we have
\[\dot g_1= 10\Gamma^2\frac{1-G_1^2/L_1^2}{G_1^3}\sin g_1^2.\]
Using~\eqref{eq:separatrix}
and~\eqref{eq:g1-interval}, one can eliminate $G_1$ by
\begin{equation}
  \label{eq:G1-g1}%
  G_1 = \frac{|\Gamma|\sqrt{5} \sin g_1}{\sqrt{3 - 5\cos^2 g_1}},
\end{equation}
and get a closed differential equation
\begin{equation}
  \label{eq:g1}
  \dot g_1 = A_1 \frac{\left( 1-\frac{5}{3}(1+\chi^2) \cos^2
      g_1 \right)\sqrt{1-\frac{5}{3} \cos^2 g_1}}{\sin g_1}
\end{equation}
where $\chi$ has been defined in \eqref{eq:A2-chi} and 
\[ A_1 = 30 \sqrt{3} \Gamma^2 \left(1 -
  \frac{5}{3}\frac{\Gamma^2}{L_1^2} \right) >0. \] Note that, due to
$\Gamma\neq 0$ and~\eqref{eq:g1-interval}, $\dot g_1 \neq 0$ along the
separatrix. So, one can separate variables and, choosing $g_1 = \pi/2$
at $t=0$ yields
\[A_1t = \int_{\pi/2}^{g_1} \frac{\sin g_1 \, dg_1}{\left( 1-
    \frac{5}{3}(1+\chi^2) \cos^2 g_1 \right) \sqrt{1 - \frac{5}{3}
    \cos^2 g_1}}.\] The variable $\cos g_1 \in \, \left]-
  \frac{\sqrt{3}}{\sqrt{5(1+\chi^2)}},
  \frac{\sqrt{3}}{\sqrt{5(1+\chi^2)}} \right[$ is a coordinate on the
separatrix. Let $x$ be defined by
\[\sqrt{\frac{5}{3}} \cos g_1 = \cos x \in \left]0, \frac{1}{(1+\chi^2)}
\right[.\]  
Then 
\begin{align*}
  A_1t &= \sqrt{\frac{3}{5}} \int_{\pi/2}^{\arccos \sqrt{\frac{5}{3}}
    \cos g_1} \frac{dx}{1 - (1+\chi^2) \cos^2 x}\\
  &=\frac{1}{2\chi} \sqrt{\frac{3}{5}} \ln \frac{\tan x - \chi}{\tan x
    + \chi},
    \quad\text{with }\, \cos x = \sqrt{\frac{5}{3}} \cos g_1.
\end{align*}
One can check that the constant $A_2$ defined in \eqref{eq:A2-chi}
satisfies $A_2 = \sqrt{\frac{5}{3}} A_1 \chi$. The equation can be
solved for $\tan x$, which yields $\tan x = - \chi \coth (A_2t)$. This
equality, jointly with
\begin{align*}
  \cos g_1 &= \sqrt{\frac{3}{5}} \cos x= \sqrt{\frac{3}{5}} \frac{\pm
1}{\sqrt{1+ \tan^2 x}},
\end{align*}
gives formula \eqref{eq:cosg1-t}.
Using~\eqref{eq:G1-g1},  the $G_1$-component of the separatrix given in
\eqref{eq:G2} can be easily obtained.
Finally, only the $\gamma$-component remains to be computed. But,
along the separatrix solution, $\dot \gamma = \partial_\Gamma
H_0$. Using~\eqref{def:FirstOrderQuadrupolar}, \eqref{eq:cosg1-t}
and~\eqref{eq:G1-g1},
\begin{align*}
\dot \gamma =& 10 \frac{\Gamma}{G_1^2}
\left(1-\frac{G_1^2}{L_1^2}\right)\sin^2 g_1 - 2\frac{\Gamma}{L_1^2}\\
 =& \frac{4\Gamma}{L_1^2\chi^2} \left( 1 - \frac{5}{3}
  (1+\chi^2) \cos^2 g_1 \right) - 2\frac{\Gamma}{L_1^2} \\
  =&
\frac{4\Gamma}{L_1^2} \frac{1}{\left( (1+\chi^2) \cosh^2
    A_2t - 1 \right)} - 2\frac{\Gamma}{L_1^2}.
\end{align*}
Since $\frac{4\Gamma}{L_1^2\chi A_2}
=\frac{4\Gamma\sqrt{3}}{L_1^2\chi^2A\sqrt{5}}=1$,
equation~\eqref{eq:gamma-t} follows.
\end{proof}

\section{Splitting of separatrices}

Lemma \ref{lm:separatrix} shows that the Hamiltonian $H_0$ in
\eqref{eq:Fquad0} has a heteroclinic connection (homoclinic  in the 
secular space for the 
Hamiltonian \eqref{def:H_0Poincare}).

The next term in the asymptotic expansion of the secular Hamiltonian
given in Lemma \ref{lemma:SecularExpansion} does not depend on $\ga$
and therefore does not break integrability. For the amended
Hamiltonian, according to classical perturbation theory, the
hyperbolic periodic orbit and the homoclinic orbit persist. Moreover,
the periodic orbit remains at $(\xi,\eta)=(0,0)$. Thus, in Delaunay
coordinates, there are two hyperbolic periodic orbits, which are of
the form
\[
\begin{cases}
Z_{\min}^\eps(t,\gamma^0)&= ( g_{\min}+O(\eps),L_1
,\ga^0+\gamma^{1}(t)+O(\eps),\Gamma)\\
Z_{\max}^\eps(t,\gamma^0)&= ( g_{\max}+O(\eps),L_1
,\ga^0+\gamma^{1}(t)+O(\eps),\Gamma),
\end{cases}
\]
where $\eps=L_2\ii$. We can choose the time origin so that 
\[
 Z_{\min,\max}^\eps(0,\gamma^0)= ( g_{\min,\max}+O(\eps),L_1
,\gamma^0,\Gamma).
\]

The second order $H_2$ in Lemma \ref{lemma:SecularExpansion} depends
on $\ga$ and therefore may (and will) break integrability. If we
consider this Hamiltonian in Poincar{\'e} coordinates
\eqref{def:PoincareVariables}, it posesses a hyperbolic critical orbit
$O(L_2^{-2})$-close to $(\xi,\eta)=(0,0)$.

To show that $H_2$ makes the stable and unstable invariant manifolds
of this periodic orbit split, we use the classical Poincar{\'e}-Melnikov
Theory~\cite{Melnikov63}. More precisely, here we define the
Poincar{\'e}-Melnikov potential
\[
\begin{split}
  \LL^\eps(\ga^0)=&\int_{0}^{+\infty}
  H_2\circ Z^\eps(t,\gamma^0)-H_2\circ Z_{\max}^\eps(t,\gamma^0)\,dt\\
  &+\int_{-\infty}^{0}
  H_2\circ Z^\eps(t,\gamma^0)-H_2\circ Z_{\min}^\eps(t,\gamma^0)\,dt,
\end{split}
\]
similarly to \cite{DelshamsG00}), but taking into account the fact
that the first order perturbation does not break integrability.

In other words, we consider as an integrable system $\HH_0=H_0+\eps
H_1$ and as a perturbation $\HH_1=H_2$ (see Lemma
\ref{lemma:SecularExpansion} and recall that we have taken
$\eps=L_2\ii$). In Lemma \ref{lm:separatrix} we have obtained the
parameterization $Z^0$ of the separatrix (see \eqref{def:separatrix}).
For the Hamiltonian $\HH_0$, we have a parameterization of the
separatrix $Z^\eps(t,\ga^0)=Z^0(t,\ga^0)+O(\eps)$.
The time origin can be chosen so that
\[
 Z^\eps(0,\ga^0)=\left(\frac{\pi}{2},
\Ga\sqrt{\frac{5}{3}}+O(\eps),\ga^0,\Ga\right),
\]
(recall that by Lemma \ref{lemma:IdentitiesOverSeparatrix}, $
Z^0(0,\ga^0)=(\pi/2,
\Ga\sqrt{5/3},\ga^0,\Ga)$).

In order to determine the splitting of separatrices, we consider the
section $g_1=\pi/2$ which is transversal to the flow locally in the
neigborhood ot the unperturbed separatrix. It is still transversal to
the perturbed stable and unstable invariant manifolds. We measure the
splitting in this section. The distance between of the invariant
manifolds is parameterized by $\ga^0$, the value of the $\gamma$
coordinate when $t=0$.

Melnikov Theory ensures that the transversal homoclinic points in the
section $g_1=\pi/2$ are $\eps^2=L_2^{-2}$-close to the non-degenerate
critical points of the Poincar{\'e}-Melnikov potential. Expanding the
potential in powers of $\eps$, we get
$\LL^\eps(\ga^0)=\LL(\ga^0)+\OO(\eps)$ with
\[
\begin{split}
  \LL(\ga^0)=&\int_{0}^{+\infty}
H_2\circ Z^0(t,\gamma^0)-H_2\circ Z_{\max}^0(t,\gamma^0)\,dt\\
&+\int_{-\infty}^{0}
H_2\circ Z^0(t,\gamma^0)-H_2 Z_{\min}^0(t,\gamma^0)\,dt.
\end{split}
\]
Moreover, since $H_2$ has a factor $e_1$ (see \eqref{eq:Foct}) it
vanishes over the periodic orbits $Z^0_{\min,\max}$. Thus, using
\eqref{def:PeriodicH0} and~\eqref{def:separatrix}, the potential reads
\begin{equation}\label{def:PoincareMelnikovPotential}
\begin{split}
  \LL(\ga^0) =&\int_{-\infty}^{+\infty}
  H_2(g_1^h(t),G_1^h(t),\ga^h(t),\Gamma)\,dt\\
  =&\int_{-\infty}^{+\infty} H_2(g_1^h(t),G_1^h(t),\gamma^0 +
  \gamma^1(t) + \gamma^2(t),\Gamma)\,dt.
\end{split}
\end{equation}
Melnikov theory then implies that, $\eps$-close to non-degenerate
critical points of $\LL$, there exist transversal homoclinic points in
the section $g_1=\pi/2$. The potential $\LL$ naturally depends on
parameters $L_1$ and $\Gamma$, which vary in the (non-empty) open set
\begin{equation}\label{def:Oset}
O = \left\{(L_1,\Ga): L_1>0, 0<\Gamma<L_1\sqrt{3/5}\right\} \subset
\RR^2.
\end{equation}
Let us write
$$\LL(\ga^0) = \LL_{L_1,\Gamma}(\ga^o).$$
The next proposition shows that $\LL$ necessarily has non-degenerate
critical points, except maybe for exceptional values of the
parameters.

\begin{proposition}
  \label{prop:Melnikov}%
  There exists a non constant real analytic function $\LL^+ : O \to
  \RR$ such that the potential~\eqref{def:PoincareMelnikovPotential}
  is of the form
  \[ \LL_{L_1,\Gamma} (\ga^0)=\LL^+_{L_1,\Gamma} e^{i\ga_0}+\ol
  {\LL^+}_{L_1,\Gamma} e^{-i\ga_0}.\] In particular, outside an
  analytic subset of $O$ of empty interior, $\LL^+$ does not vanish
  and thus $\LL$ (as a function of $\gamma^0$) has non-degenerate
  critical points.
\end{proposition}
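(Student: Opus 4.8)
The plan is to reduce the statement to a harmonic analysis computation for the integral~\eqref{def:PoincareMelnikovPotential} in the angle $\ga^0$. First I would recall from~\eqref{eq:Foct} the explicit trigonometric structure of $F_{\oct}$, hence of $H_2$: it is a trigonometric polynomial in the angles $g_1$ and $g_2$, of degree one in $g_2$ (only $\cos g_2$ and $\sin g_2$ appear). Since $\ga = -g_2$ (recall~\eqref{def:CoordinatesInGamma}), after substituting $\ga^h(t)=\ga^0+\ga^1(t)+\ga^2(t)$ and using the addition formulas, $H_2\circ Z^0(t,\ga^0)$ becomes $\cos\ga^0$ times a function of $t$ plus $\sin\ga^0$ times another function of $t$ — equivalently $\LL^+_{L_1,\Ga}e^{i\ga^0}+\overline{\LL^+}_{L_1,\Ga}e^{-i\ga^0}$ with
\[
\LL^+_{L_1,\Gamma}=\tfrac12\int_{-\infty}^{+\infty}\bigl(\text{[coeff. of }\cos g_2\text{]}+i\,\text{[coeff. of }\sin g_2\text{]}\bigr)\,e^{-i(\ga^1(t)+\ga^2(t))}\,dt,
\]
where the coefficients are evaluated along $(g_1^h(t),G_1^h(t),\Gamma)$. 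This shows the claimed form; the integral converges because $H_2$ carries a factor $e_1$ which decays exponentially along the separatrix (as $g_1^h(t)\to g_1^{\min,\max}$ and $G_1^h(t)\to L_1$, one has $e_1\to 0$), so $\LL^+$ is well defined, and real-analyticity in $(L_1,\Ga)\in O$ follows from analytic dependence of the integrand and uniform exponential bounds allowing differentiation under the integral sign.

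The real content is to show $\LL^+$ is \emph{not} identically zero on $O$, for then its zero set is a proper analytic subset with empty interior, off of which $\LL^+\neq 0$; writing $\LL^+=|\LL^+|e^{i\psi}$ one gets $\LL_{L_1,\Ga}(\ga^0)=2|\LL^+|\cos(\ga^0+\psi)$, whose critical points $\ga^0=-\psi\pmod\pi$ are non-degenerate precisely because $|\LL^+|\neq 0$. To prove $\LL^+\not\equiv 0$ I would compute an asymptotics of $\LL^+_{L_1,\Gamma}$ in a limiting regime of the parameters where the integral becomes tractable — the natural choice is $\Gamma\to 0^+$ (equivalently $\tilde\Gamma\to 1$, inclination near $\pi/2$), or alternatively $\Gamma\to L_1\sqrt{3/5}$ (the boundary~\eqref{cond:beta-upperbound}). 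In such a limit the quantities $\chi$ and $A_2$ from~\eqref{eq:A2-chi} degenerate in a controlled way, the separatrix~\eqref{eq:cosg1-t}–\eqref{eq:gamma-t} simplifies, and one can evaluate or estimate $\LL^+$ — ideally exhibiting a nonzero leading term, possibly via residues since the integrand is a meromorphic function of $t$ with poles coming from the complex singularities of $\sinh A_2 t$ and $\cosh A_2 t$ (the contour-shift/residue method, consistent with the paper's emphasis on ``complex singularities'').

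An alternative, perhaps cleaner, route avoids any limiting regime: use the factor $e^{-i\ga^1(t)}=e^{2i\Gamma t/L_1^2}$ in the integrand and compute $\LL^+$ by residues directly. The integrand, as a function of $t$, extends meromorphically to $\CC$, with poles at the zeros of $(1+\chi^2)\cosh^2 A_2 t-1$ and related expressions; closing the contour in the appropriate half-plane (dictated by the sign of $\Gamma$, using~\eqref{def:GammaPos}) expresses $\LL^+$ as an absolutely convergent sum of residues, each an explicit elementary function of $L_1,\Gamma$. Then $\LL^+\equiv 0$ would force infinitely many analytic identities among these residues, which one rules out either by the $\Gamma\to 0$ asymptotics above or by inspecting the residue at the pole closest to the real axis (whose contribution dominates for $\Gamma/L_1^2$ large, i.e. decays slowest). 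The main obstacle I anticipate is precisely this non-vanishing step: the bookkeeping of $H_2$ composed with the separatrix is heavy, and one must track enough of the residue expansion — or enough terms of a parameter asymptotics — to be certain the leading coefficient is genuinely nonzero rather than cancelling by an unnoticed symmetry; everything else (convergence, analyticity, the $e^{\pm i\ga^0}$ form, passing from non-vanishing of $\LL^+$ to non-degenerate critical points) is routine.
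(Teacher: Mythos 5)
Your proposal is correct and follows essentially the same route as the paper: identify the two-harmonic structure of $\LL$ coming from the degree-one dependence of $H_2$ on $\ga$, compute $\LL^+$ explicitly by shifting the contour and summing residues of the meromorphic integrand (the paper does this in Lemmas \ref{lemma:FFasg1function}--\ref{lemma:ResiduesImaginary:2} after reparameterizing the separatrix), and then establish $\LL^+ \not\equiv 0$ by the $\Gamma\to 0^+$ asymptotics, where the pole closest to the real axis gives the dominant, manifestly non-constant contribution (the paper identifies the $\tilde C_1 e^{-2\beta}$ term with $\beta = O(\Gamma^{-1})$ dominating the $e^{-\alpha}$, $e^{-2\alpha}$ terms with $\alpha = O(\Gamma^{-2})$). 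You also correctly flag the convergence/analyticity points and the reduction of non-degeneracy of critical points to $\LL^+ \neq 0$, which the paper leaves largely implicit.
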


This proposition is proven in Section \ref{sec:Melnikov}, where the
function $\LL^+$ is computed explicitly (see formula
\eqref{def:Lplus}). It is easier to describe the dynamics when the
dimension is as low as possible, so let us carry out the symplectic
reduction of the flow by time. Since $\dot\ga\neq 0$, define the
Poincar{\'e} section $\Sigma_{\ga_0}=\{\ga=\ga_0\}$ within some fixed
energy level, and the corresponding return map, induced by the
Hamiltonian \eqref{def:HSecExpanded},
\[
 \PP_{\ga_0}: \Sigma_{\ga_0} \longrightarrow \Sigma_{\ga_0}
\]
for which the Poincar{\'e} coordinates $(\xi,\eta)$ may be used (see
\eqref{def:PoincareVariables}). This map has a hyperbolic fixed point
$L_2^{-2}$-close to the origin with one dimensional stable and
unstable invariant manifolds.  The classical Melnikov theory applied
to the Melnikov potential obtained in Proposition \ref{prop:Melnikov}
entails the following corollary and theorem. Note that the circle
$g_1=\pi/2$ locally corresponds to the line $\xi=0$ (see
\eqref{def:PoincareVariables}).



\begin{corollary}
  \label{prop:Splitting}%
  Fix $(L_1^0,\Ga^0)\in O$ (defined in \eqref{def:Oset}) such that
  $\LL^+_{L_1^0,\Ga^0}\neq 0$ and let $\ga_0^*$ be a non-degenerate
  critical point of $\LL^+_{L_1^0,\Ga^0}$. For $L_2$ large enough,
  there exists some $\wt\ga_0^*=\ga_0^*+O(L_2^{-1})$ such that the
  Poincar{\'e} map $\PP_{\wt\ga_0^*}$ has a transversal homoclinic point
  in the line $\{\xi=0\}\subset \Sigma_{\wt\ga_0^*}$.
\end{corollary}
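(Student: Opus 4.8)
The plan is to translate Proposition~\ref{prop:Melnikov} into a statement about the Poincar\'e return map $\PP_{\ga_0}$ by a standard implicit-function argument. Since the flow of the full secular Hamiltonian \eqref{def:HSecExpanded} has $\dot\ga\neq 0$ in a neighborhood of the unperturbed heteroclinic orbit (this holds for $\eps=L_2^{-1}$ small because it holds for the unperturbed system, where $\dot\ga=\ga^1(t)'\neq0$ thanks to $\Ga>0$), the sections $\Sigma_{\ga_0}=\{\ga=\ga_0\}$ intersected with a fixed energy level are genuine $2$-dimensional Poincar\'e sections, and in the Poincar\'e coordinates $(\xi,\eta)$ the return map $\PP_{\ga_0}$ depends analytically on $\ga_0$, on the parameters $(L_1,\Ga)$, and on $\eps$. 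For $\eps$ small this map has a hyperbolic fixed point $O(L_2^{-2})$-close to the origin, with one-dimensional stable and unstable manifolds, as already recorded in the text.

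Next I would recall how the Melnikov potential $\LL(\ga^0)$ of \eqref{def:PoincareMelnikovPotential} measures the splitting in the section $g_1=\pi/2$. The key geometric point, noted in the excerpt, is that the circle $g_1=\pi/2$ corresponds locally to the line $\{\xi=0\}$ in Poincar\'e coordinates. The classical Poincar\'e--Melnikov theory (in the form adapted in the text to the situation where the first-order perturbation $H_1$ does not break integrability, so that the genuinely perturbative part is $\eps^2 H_2$) gives that the signed distance between the perturbed stable and unstable manifolds, measured along $\{\xi=0\}$ inside $\Sigma_{\ga_0}$ and parameterized by $\ga^0$, equals $\eps^2\bigl(\partial_{\ga^0}\LL_{L_1,\Ga}(\ga^0)+O(\eps)\bigr)$ up to a nonzero normalization factor. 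Consequently, a simple zero of $\ga^0\mapsto\partial_{\ga^0}\LL_{L_1,\Ga}(\ga^0)$ produces, for $\eps$ small, a nearby $\ga^0$ at which the two manifolds cross transversally in $\Sigma_{\ga^0}$.

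Then I would plug in Proposition~\ref{prop:Melnikov}: for $(L_1^0,\Ga^0)\in O$ with $\LL^+_{L_1^0,\Ga^0}\neq0$, writing $\LL^+_{L_1^0,\Ga^0}=R e^{i\psi}$ with $R>0$, the potential is $\LL_{L_1^0,\Ga^0}(\ga^0)=2R\cos(\ga^0+\psi)$, hence $\partial_{\ga^0}\LL=-2R\sin(\ga^0+\psi)$, which has two geometrically distinct simple zeros on $\TT$, namely $\ga_0^*=-\psi$ and $\ga_0^*=\pi-\psi$; at each, the second derivative $-2R\cos(\ga^0+\psi)=\mp2R\neq0$, so the critical point is non-degenerate. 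Fixing one such $\ga_0^*$, the implicit function theorem applied to the (analytic) Melnikov distance function in the variable $\ga^0$ yields $\wt\ga_0^*=\ga_0^*+O(L_2^{-1})$ at which $\PP_{\wt\ga_0^*}$ has a transversal homoclinic point in $\{\xi=0\}\subset\Sigma_{\wt\ga_0^*}$, which is the assertion.

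The main obstacle is not in any of these steps individually but in making the perturbative bookkeeping rigorous: one must verify that the error terms in the expansions of the stable and unstable manifolds, of the return time, and of the location of the hyperbolic fixed point are genuinely $O(\eps)$ relative to the $\eps^2$-size main term of the splitting, uniformly on a neighborhood of $(L_1^0,\Ga^0)$ and along the (non-compact, asymptotically exponentially approached) heteroclinic orbit. This requires control of the hyperbolic invariant manifolds up to and including the asymptotic regime $t\to\pm\infty$, i.e.\ the usual care that the improper integral defining $\LL$ converges and that its convergence is stable under the perturbation; since $H_2$ carries a factor $e_1$ that decays exponentially along the separatrix (as used just before \eqref{def:PoincareMelnikovPotential}), this convergence is in fact exponentially fast, and the standard Melnikov estimates of~\cite{DelshamsG00} apply. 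The remaining points — analyticity and non-vanishing of $\LL^+$ off an exceptional analytic subset, and the explicit form of $\LL^+$ — are exactly the content of Proposition~\ref{prop:Melnikov}, which we are allowed to invoke.
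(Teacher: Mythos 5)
Your proof is correct and takes essentially the same route as the paper, which simply states that the corollary is ``entailed'' by classical Poincar\'e--Melnikov theory applied to Proposition~\ref{prop:Melnikov}. You have filled in the standard details---writing $\LL^+=Re^{i\psi}$ so that $\LL(\ga^0)=2R\cos(\ga^0+\psi)$ has exactly two simple critical points, invoking the fact that the genuine perturbation $\eps^2H_2$ gives a splitting distance of size $\eps^2\bigl(\partial_{\ga^0}\LL+O(\eps)\bigr)$ along $\{\xi=0\}$, and applying the implicit function theorem to locate $\wt\ga_0^*=\ga_0^*+O(\eps)$---all consistent with the paragraphs the paper places immediately before the corollary, including the observation that convergence of the improper Melnikov integral is ensured by the exponential decay of the factor $e_1$ in $H_2$.
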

Recall that all Poincar{\'e} maps are conjugate. Therefore, this corollary 
implies that there are transversal homoclinic points for 
$\PP_{\ga_0}$ for all $\ga_0\in\TT$.

The transversality of invariant manifolds given by the corollary a
priori refers to the secular Hamiltonian obtained after one step of
averaging. Nevertheless, the conclusion holds for the Poincar{\'e} maps of
the secular Hamiltonian of any finite order. Indeed, higher order
averaging only modifies higher orders of the asymptotic expansion of
the splitting, while transversality at the first order was entailed by
the first order of the expansion, as given by the Poincar{\'e}-Melnikov
potential. Hence, by considering the analytic set defined by the
condition $\LL^+_{L_1^0,\Ga^0}\neq 0$ at all orders of averaging, one
gets the main result, where we restrict to some fixed compact set
$$K \subset O = \left\{(L_1,\Ga): L_1>0, 0<\Gamma<L_1\sqrt{3/5}\right\}
\subset \RR^2.$$

\begin{theorem}
  \label{thm:horseshoe}%
  Fix any $\ga_0\in\TT$. There is an analytic set $K^o$ of full measure in $K$ 
such that, for
  parameters $(L_1^0, \Ga^0)$ in $K^o$, for $L_2$ large enough, the
  Poincar{\'e} maps $\PP_{\ga_0}:\Sigma_{\ga_0}\mapsto\Sigma_{\ga_0}$ of
  the secular systems of all orders posess a horseshoe.
\end{theorem}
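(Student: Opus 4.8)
The plan is to derive Theorem~\ref{thm:horseshoe} as a consequence of Corollary~\ref{prop:Splitting} together with the standard Birkhoff--Smale homoclinic theorem. First I would fix $\ga_0\in\TT$ and recall that, by Proposition~\ref{prop:Melnikov}, the zero set $\{(L_1,\Ga)\in O: \LL^+_{L_1,\Ga}=0\}$ is a proper analytic subset of $O$, hence of empty interior and of zero Lebesgue measure; intersecting with the fixed compact $K$ gives that $K^0_{(1)}:=K\setminus\{\LL^+=0\}$ has full measure in $K$. One must do this at \emph{every} order of averaging: at order $k$ the Melnikov potential is $\LL^{(k)}_{L_1,\Ga}(\ga^0)=\LL^+_{L_1,\Ga}e^{i\ga^0}+\overline{\LL^+}_{L_1,\Ga}e^{-i\ga^0}+O(\eps)$ with the same leading coefficient $\LL^+$ (higher-order averaging only perturbs the $O(\eps)$ tail, as noted in the paragraph preceding the theorem), so the \emph{same} analytic exceptional set works for all orders, and setting $K^o:=K\cap(O\setminus\{\LL^+=0\})$ yields a single analytic set of full measure in $K$ valid simultaneously for all orders.

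Next, for $(L_1^0,\Ga^0)\in K^o$ I would invoke Corollary~\ref{prop:Splitting}: since $\LL^+_{L_1^0,\Ga^0}\neq 0$ and the trigonometric polynomial $\ga^0\mapsto\LL^+_{L_1^0,\Ga^0}e^{i\ga^0}+\overline{\LL^+}_{L_1^0,\Ga^0}e^{-i\ga^0}$ has (two) non-degenerate critical points, there is, for $L_2$ large enough, a value $\wt\ga_0^*=\ga_0^*+O(L_2^{-1})$ for which the Poincar\'e map $\PP_{\wt\ga_0^*}$ has a transversal homoclinic point to the hyperbolic fixed point near the origin in $\Sigma_{\wt\ga_0^*}$. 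Because all the Poincar\'e maps $\PP_{\ga_0}$ (for different sections $\ga=\ga_0$) are smoothly conjugate along the flow, transversality of the homoclinic intersection is preserved under this conjugacy, so $\PP_{\ga_0}$ itself has a transversal homoclinic point for the originally fixed $\ga_0$. Finally, the Birkhoff--Smale theorem applied to the area-preserving (hence, in this planar setting, automatically ``dissipative-free'') diffeomorphism $\PP_{\ga_0}$ with a transversal homoclinic orbit produces an invariant Cantor set on which some iterate of $\PP_{\ga_0}$ is conjugate to a shift on finitely many symbols --- i.e.\ a horseshoe --- which is exactly the assertion, uniformly over $(L_1^0,\Ga^0)\in K^o$ and over all orders of the secular normal form.

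I expect the main obstacle to be the uniformity-in-the-order bookkeeping rather than any single dynamical step: one must make sure that the exceptional analytic set really is \emph{one} set independent of $k$, which requires knowing that the leading Melnikov coefficient $\LL^+$ is genuinely order-independent (this rests on the fact recorded earlier that $F_{\quadr}$ and $F_{\oct}$, and hence $H_0,H_1,H_2$, are not modified by second- and higher-order averaging, so only the $O(\eps)$ corrections to $Z^\eps$ change with $k$), and that for each order the threshold ``$L_2$ large enough'' can be taken uniform on the compact $K$ --- which follows from the analytic (hence continuous, hence locally uniform) dependence of $\LL^+$ and of the remainders on $(L_1,\Ga)$ together with compactness. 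A secondary point to be careful about is that the hyperbolic object is a periodic orbit of the secular flow rather than a fixed point; passing to the time-$\ga$ Poincar\'e section $\Sigma_{\ga_0}$ (legitimate since $\dot\ga\neq 0$ near the homoclinic, by Remark~\ref{rmk:HomoclinicLevelEnergy}) turns it into a hyperbolic fixed point of $\PP_{\ga_0}$ with one-dimensional stable and unstable manifolds, so the standard two-dimensional Birkhoff--Smale machinery applies verbatim once the transversal homoclinic point is in hand.
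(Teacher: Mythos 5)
Your proposal follows essentially the same route as the paper's own (sketched) argument: the paper too exploits the order-independence of $H_0$, $H_1$, $H_2$ (hence of $\LL^+$) to obtain a single analytic exceptional set valid for all orders of averaging, propagates Corollary~\ref{prop:Splitting} to every section $\Sigma_{\ga_0}$ via conjugacy of the Poincar\'e maps, and lets the transversal homoclinic point produce the horseshoe (the paper leaving the Birkhoff--Smale step implicit). One small slip to note: the ``$L_2$ large enough'' threshold cannot be taken uniformly on all of $K$, since it degenerates as $(L_1,\Ga)$ approaches the zero set of $\LL^+$; it is only locally uniform on $K^o$, which is all the theorem's quantifier order actually requires.
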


\subsection{Splitting of separatrices for the non-averaged 
Hamiltonian}\label{sec:Diffusion}

As explained in Remark \ref{rmk:HomoclinicLevelEnergy}, for the
secular Hamiltonian we have a hyperbolic periodic orbit at each energy
level within a compact interval of such levels. These periodic orbits
form a normally hyperbolic invariant
cylinder. Corollary~\ref{prop:Splitting} implies that the invariant
manifolds of this cylinder intersect transversally.

In Section \ref{sec:Melnikov} we consider $L_1$ and $L_2$ as
constants.  One can lift the dynamics to the extended phase space,
with the additional Keplerian coordinates $(\ell_1, L_1, \ell_2,
L_2)$. Then, the cylinder gets enlarged by four extra dimensions:
$(L_1, L_2)$, which are just constants of motion, and $(\ell_1,
\ell_2)$, which are performing a rigid rotation.

This extended system, after rescaling, is just the first order  of
\eqref{eq:F}, that is
\[
  F_0 = F_{\Kep} + F_{\secu}.
\]
Call $\La$ the cylinder of this Hamiltonian. Now one would like to
analyze Hamiltonian \eqref{eq:F}, that is, the full (non averaged)
three-body problem. The first step is to prove the persistence of the
invariant cylinder when $L_2$ is large enough, using that the reminder
is $O(L_2^{-k})$ for some $k\in\NN$.  Since we are in a singular
perturbation regime, the classical theory of persistence of normally
hyperbolic invariant manifolds \cite{Fen72} cannot be applied
directly.  However, now there are results which deal with singularly
perturbed problems and which can be applied to the present setting
\cite{Yang09,GuardiaLT15}. Note that we can do global averaging up to high 
order. Therefore, one does not need to face the problems in \cite{BernardKZ11} 
and the obtained cylinder can be as smooth as needed. Call $\wt \La$ the 
perturbed cylinder.

In this setting, one can expect Arnold diffusion i.e., orbits whose
action components drift by a large amount, uniformly with respect to
large $L_2$'s. Since
\begin{itemize}
\item $G_1$ is prescribed by $\wt\La$ and the homoclinic channel,
\item $G_2$ (resp. $L_1$) is constrained by the conservation of the
  angular momentum (resp. the energy),
\end{itemize}
one would expect to obtain orbits with drift in $L_2$, that is in the
semi major axis of the outer body. This is a remarkable feature, since
semi major axes are known to be very stable when two of the three
masses are very small \cite{Niederman96}.

To obtain unstable orbits one usually combines two types of
dynamics. The ``inner dynamics'' is the dynamics of Hamiltonian
\eqref{eq:F} in restriction to the cylinder $\wt\La$. The ``outer
dynamics'' and is the so-called scattering map \cite{DelshamsLS08},
obtained as the following limit. Assume that the invariant manifolds
of $\wt\La$ split transversally along a homoclinic chanel. Consider a
homoclinic orbit in the chanel, which is asymptotic to the trajectory
of some point $x_-\in\wt\Lambda$ as $t\rightarrow -\infty$ and to the
trajectory of some other point $x_+\in\wt\Lambda$ as $t\rightarrow
+\infty$. Then, we say that the scattering map $\SSS$ maps $x_-$ to
$x_+$. hat $\SSS$ be a map (as opposed to a more general
correspondance) is proved in~\cite{DelshamsLS08} under general
hypotheses which are satisfied here. Note that this map depends on the
chosen homoclinic chanel and therefore it may not be defined globally
--usually it is multivaluated. Provided that these two maps do not
have common invariant circles, by iterating them in a random order,
one gets pseudo-orbits which have the wanted unstable behavior. A
shadowing argument then permits to approximately realize these
pseudo-orbits as real orbits of the system.

Understanding both the inner and outer dynamics is certainly not easy
for Hamiltonian \eqref{eq:F}.  Concerning the inner dynamics, Jefferys
and Moser~\cite{Jefferys66} have used KAM theory to show that this
cylinder contains quasiperiodic hyperbolic tori which form a positive
measure set. There should be a very rich dynamics in the complement of
these tori in the cylinder. In particular, since the Hamiltonian
restricted to the cylinder has three degrees freedom, there may exist
Arnold diffusion in the cylinder itself.

Concerning the outer dynamics, one needs first to prove that the
invariant manifolds of the cylinder $\wt\La$ split transversally and
then derive some precise dynamical behavior for the scattering map, as
in~\cite{DelshamsLS08}. The analysis in the present paper allows us to
perform only the first step.

\begin{theorem}\label{thm:NonAveraged}
  Fix $L_1^+>L_1^->0$.  Consider the Hamiltonian \eqref{eq:F} with
  $L_1\in [L_1^-,L_1^+]$ and $L_2\geq L_2^0$. If $L_2^0$ is large
  enough,
  \begin{enumerate}
  \item there exists a normally hyperbolic invariant cylinder $\wt\La$,
  \item the invariant manifolds of the cylinder $\wt\La$ intersect
    transversally along a homoclinic chanel, which is diffeomorphic to
    $\wt\La$,
  \item and there exists a scattering map $\SSS:\wt\La\longrightarrow
    \wt\La$ associated to this homoclinic chanel.
  \end{enumerate}
\end{theorem}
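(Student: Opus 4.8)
The plan is to deduce Theorem~\ref{thm:NonAveraged} from the results already obtained for the secular Hamiltonian (in particular Lemma~\ref{lm:separatrix}, Remark~\ref{rmk:HomoclinicLevelEnergy}, Proposition~\ref{prop:Melnikov} and Corollary~\ref{prop:Splitting}) by a singular-perturbation persistence argument together with the abstract scattering-map machinery of~\cite{DelshamsLS08}. First I would set up the unperturbed object: working in the extended phase space with Keplerian coordinates $(\ell_1,L_1,\ell_2,L_2)$ adjoined, Remark~\ref{rmk:HomoclinicLevelEnergy} provides, for $L_1\in[L_1^-,L_1^+]$ and each energy $h$ in a compact interval of negative values, a hyperbolic periodic orbit of $H_0$; letting $h$ and the Keplerian actions vary one assembles these into a normally hyperbolic invariant manifold $\La$ for $F_0 = F_\Kep + F_\secu$, of the stated cylindrical topology, with the rotation in $(\ell_1,\ell_2)$ and the constancy of $(L_1,L_2)$. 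The hyperbolicity is uniform because the quadrupolar normal form~\eqref{def:FirstOrderQuadrupolar} gives, via the local model $H_0+\Ga^2/L_1^2 = 4xy + O_3$, eigenvalues bounded away from zero on the compact parameter set $K$.

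Next I would address persistence, which is item~(1). Since $F = F_\Kep + F_\secu + O(L_2^{-k-1})$ and the secular part scales like $L_2^{-6}$, this is a singular perturbation: the normal hyperbolicity of $\La$ (of size $O(L_2^{-3})$ in the appropriate time rescaling) is weak compared with the size of the perturbation one naively reads off, so the classical Fenichel theory~\cite{Fen72} does not apply verbatim. The remedy, as indicated in the text, is to invoke the singular-perturbation persistence theorems of~\cite{Yang09,GuardiaLT15}: after rescaling time by $L_2^{3}$ (or the analogous factor dictated by~\eqref{eq:Fsec}), the remainder becomes $O(L_2^{-k-1+6})$ relative to the rescaled secular dynamics, which for $k$ large is arbitrarily small compared with the (now $O(1)$) hyperbolicity; one then has genuine normal hyperbolicity with a large spectral gap and the standard perturbation theorem yields a $C^r$ cylinder $\wt\La$, $O(L_2^{-k})$-close to $\La$, with $r$ as large as desired because, as noted, global averaging can be carried out to any finite order (so one avoids the obstructions of~\cite{BernardKZ11}). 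I would also record that $\wt\La$ carries well-defined stable and unstable manifolds $W^{s,u}(\wt\La)$ which are $O(L_2^{-k})$-$C^r$-close to those of $\La$.

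For item~(2), transversality of $W^s(\wt\La)$ and $W^u(\wt\La)$, I would reduce to Corollary~\ref{prop:Splitting}. That corollary already gives, for parameters in the full-measure analytic set $K^o$ and for $L_2$ large, a transversal homoclinic point of the Poincar\'e map $\PP_{\ga_0}$ of the secular system of any finite order $k$; fixing $k$ large enough (relative to the $r$ needed above) and undoing the time-reduction, this is exactly a point of transversal intersection of the stable and unstable manifolds of the secular cylinder. Because the full Hamiltonian~\eqref{eq:F} differs from the order-$k$ secular truncation by $O(L_2^{-k-1})$ and all the relevant manifolds depend $C^r$-continuously on this remainder, transversality — being an open condition — persists for~\eqref{eq:F} itself once $L_2^0$ is large. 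The Melnikov computation of Proposition~\ref{prop:Melnikov}, giving $\LL_{L_1,\Ga}(\ga^0) = \LL^+ e^{i\ga_0} + \ol{\LL^+} e^{-i\ga_0}$ with a simple critical structure, shows moreover that near each non-degenerate critical point $\ga_0^*$ the intersection is one single transversal homoclinic orbit, and as $\ga_0^*$ and the remaining continuous parameters sweep out their ranges these orbits foliate a homoclinic channel diffeomorphic to $\wt\La$. Item~(3) is then a direct application of~\cite{DelshamsLS08}: a transversal homoclinic channel diffeomorphic to the normally hyperbolic manifold, with the cylinder compact and the flow complete on it, is precisely the hypothesis under which the wave maps $\Omega_\pm$ are diffeomorphisms onto the channel and the scattering map $\SSS = \Omega_+ \circ \Omega_-^{-1} : \wt\La \to \wt\La$ is a well-defined (exact symplectic) diffeomorphism; one only has to check the non-degeneracy hypotheses there, which follow from the transversality established in item~(2).

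The main obstacle is the persistence step~(1): one must verify carefully that the singular-perturbation framework of~\cite{Yang09,GuardiaLT15} genuinely applies here, i.e.\ that after the correct time rescaling the hyperbolicity rate and the perturbation size sit in the right ratio, that the cylinder is uniformly normally hyperbolic up to its boundary (the boundary coming from the energy and the $L_1$ cutoff, so one works with a manifold-with-boundary and must either extend the vector field or argue with the overflowing/inflowing variant of the invariant-manifold theorem), and that the number of derivatives lost is compatible with the order $k$ of averaging needed to make the remainder negligible. Everything else is either quoted (the abstract scattering map) or an open-condition perturbation argument off the already-proven secular results.
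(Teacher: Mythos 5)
Your overall strategy is the same as the paper's: cite the singular-perturbation normal-hyperbolicity results of~\cite{Yang09,GuardiaLT15} for item~(1), and invoke the scattering-map machinery of~\cite{DelshamsLS08} for items~(2)--(3), ultimately leaning on Proposition~\ref{prop:Melnikov}. Where you diverge is in item~(2). The paper never re-uses Corollary~\ref{prop:Splitting}. Instead, it writes the full (non-averaged) Hamiltonian directly as $F = H_0 + H_1$ with $H_0 = F_\Kep - \mu_1 m_2 F_\quadr$ and $H_1 = F - H_0 = O(L_2^{-2})$, sets up the Poincar\'e--Melnikov potential $\wt{\LL}$ of~\cite{DelshamsLS08} in all the variables $(\ell_1^0,L_1^0,\ell_2^0,L_2^0,\gamma^0,\Gamma^0)$, and observes that the reduced function~\eqref{def:PoincareFunction}, obtained by sliding along the frequency vector of the torus, equals $\LL(\gamma^0 + \om_3\tau) + O(L_2^{-2})$. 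Non-degenerate critical points of $\LL$ (Proposition~\ref{prop:Melnikov}) then survive directly in~\eqref{def:PoincareFunction}. You instead take the already-established transversal homoclinic point of the secular return map (Corollary~\ref{prop:Splitting}) and push it to the full problem by an open-condition perturbation argument.

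Your route is defensible, but you should be more careful at the place where the two proofs differ. The secular transversality of Corollary~\ref{prop:Splitting} lives in a $2$-dimensional Poincar\'e section of the $4$-dimensional secular phase space, whereas Theorem~\ref{thm:NonAveraged} concerns the invariant manifolds of a $6$-dimensional cylinder in an $8$-dimensional phase space with the extra directions $(\ell_1,L_1,\ell_2,L_2)$ adjoined. Saying ``transversality is an open condition, so it persists'' skips the verification that a transversal crossing in the secular factor lifts to a transversal crossing of the full $W^{s}(\wt\La)$ and $W^{u}(\wt\La)$ in the larger space, and that the quantitative sizes cooperate: the secular splitting is $O(L_2^{-2})$, not $O(1)$, while the averaging remainder is $O(L_2^{-k-1})$, and the fast angles $\ell_1,\ell_2$ rotate rapidly so their contribution to any Melnikov integral could interfere. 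The paper's direct Melnikov computation for the full system sidesteps all of this at once: the sufficient condition of~\cite{DelshamsLS08} is exactly the non-degeneracy of a single scalar function of $\tau$, and the extra angles enter only through the $O(L_2^{-2})$ remainder in $\wt\LL = \LL + O(L_2^{-2})$, so nothing about the fast directions needs to be argued separately. If you keep your route, you should at least reformulate your step~(2) in those terms, i.e.\ state explicitly that the function of~\eqref{def:PoincareFunction} inherits non-degenerate critical points from $\LL$ because the perturbation to $\LL$ coming from the adjoined fast variables is of lower order than the secular contribution and does not destroy its Morse critical points. With that caveat, everything else in your proposal --- the construction of $\La$ out of the energy family of hyperbolic periodic orbits, the time rescaling to make the hyperbolicity rate $O(1)$ relative to the remainder, the remark about boundary (overflowing/inflowing) invariant manifolds, and the quotation of~\cite{DelshamsLS08} for the scattering map --- matches the paper.
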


Note that, in the above statement, the cylinder is invariant in the
sense that the vector field is tangent to the cylinder, but orbits may
escape from its boundary (sometimes one rather refers to these
manifolds as weakly invariant).

\begin{proof}
  As explained above, the persistence of the invariant cylinder can be
  obtained by the available results of persistence of normally
  hyperbolic invariant cylinders in the singular perturbation setting
  \cite{Yang09, GuardiaLT15}.

  For the other statements, we may use the results in
  Proposition~\ref{prop:Melnikov} and \cite{DelshamsLS08}. Using the
  expression of $F_\secu$ in \eqref{eq:Fav}, one can split the
  Hamiltonian \eqref{eq:F} as $F=H_0+H_1$ with $H_0=F_\Kep -\mu_1m_2
  F_{\quadr}$ and $H_1=F-H_0$.  Therefore,
  \[
  \begin{split} H_1=&-\mu_1 m_2 (\sigma_0-\sigma_1) F_{\oct} + O \left(
      \frac{a_1^4}{a_2^5} \right)\\ &-\mu_1 m_2 (\sigma_0-\sigma_1) F_{\oct}
    + O \left(L_2^{-5} \right).
  \end{split}
  \] and therefore satisfies $H_1=O(L_2^{-2})$.

  Let us abuse notation and assume that $H_0$ and $H_1$ are expressed
  in the variables given in \eqref{def:CoordinatesInGamma} and that
  time has been suitably scaled.  As we have explained the Hamiltonian
  $H_0$ has as normally hyperbolic invariant cylinder. Recall that
  $L_1$, $L_2$ and $\Ga$ are first integrals of $H_0$. Thus, the
  dynamics in this cylinder is foliated by three dimensional invariant
  tori. Let us define $Y_0\equiv Y_0(t, \ell_1^0, L_1^0, \ell_2^0,
  L_2^0, \ga^0, \Ga^0)$ the trajectory on the invariant tori
  $L=L_i^0$, and $\Ga=\Ga^0$ in the cylinder with initial condition in
  the $(\ell_1, \ell_2,\ga)$ variables given by $(\ell_1^0,
  \ell_2^0,\ga^0)$.

  Since $H_0$ is integrable, the stable and unstable invariant
  manifolds of the cylinder agree, and form a homoclinic manifold.
  The latter is given (at the first order) by the homoclinic of the
  quadrupolar Hamiltonian obtained in Lemma \ref{lm:separatrix} in the
  space $(g_1,G_1,\ga,\Ga)$.  Recall that for $H_0$ the variables
  $L_1$ and $L_2$ are first integrals. The dynamics of the variables
  $\ell_1$ and $\ell_2$ is close to a rigid rotation and can be easily
  deduced. These homoclinic manifold can be parameterized by time $t$
  and the coordinates of the cylinder $Y\equiv Y(t, \ell_1^0, L_1^0,
  \ell_2^0, L_2^0, \ga^0, \Ga^0)$. Note that $Y$ can be asymptotic to
  different points in the cylinder as $t\longrightarrow
  \pm\infty$. There exist smooth functions $(\ell^\pm_1,
  \ell^\pm_2,\ga^\pm)=(\ell^\pm_1, \ell^\pm_2,\ga^\pm)(\ell_1^0,
  L_1^0, \ell_2^0, L_2^0, \ga^0, \Ga^0)$ such that
\[
 Y(t, \ell_1^0, L_1^0, \ell_2^0, 
L_2^0, \gamma^0, \Gamma^0)- Y_0(t, \ell_1^\pm, L_1^0, \ell_2^\pm, L_2^0, 
\ga^\pm, \Ga^0)\longrightarrow 0
\]
as $t\longrightarrow\pm\infty$.

Then, to prove that the invariant manifolds split and that one can
define the scattering map of the full Hamiltonian $H_0+H_1$, one may
apply the results in \cite{DelshamsLS08} (see also
\cite{DelshamsLS06a}). Define the Poincar{\'e}-Melnikov potential
\[
\wt{ \mathcal L}( 
\ell_1^0, L_1^0, \ell_2^0, L_2^0, \gamma^0, 
\Gamma^0)=\int_{-\infty}^{0}H_1\circ Y -H_1\circ Y_0^-\,dt
+\int_{0}^{+\infty}H_1\circ Y -H_1\circ Y_0^+\,dt,
\]
where $Y$ stands for $Y(t, \ell_1^0, L_1^0, \ell_2^0, L_2^0, \gamma^0,
\Gamma^0)$ and $Y_0^\pm$ for $Y_0(t, \ell_1^\pm, L_1^0, \ell_2^\pm, 
L_2^0, \gamma^\pm, \Gamma^0)$. 

Consider the function
\begin{equation}\label{def:PoincareFunction}
 \tau\mapsto \wt{ \mathcal L}( 
\ell_1^0+\om_1\tau, L_1^0, \ell_2^0+\om_2\tau, L_2^0, \gamma^0+\om_3\tau, 
\Gamma^0)
\end{equation}
where $\om=(\om_1,\om_2,\om_3)$ is the frequency vector associated
with the torus $L=L_i^0$, and $\Ga=\Ga^0$. Results in
\cite{DelshamsLS08} imply that each non-degenerate critical point of
this function gives rise to a transversal intersection of the
invariant manifolds. The non-degeneracy of the critical point allows
us to define a local scattering map.

Using the formula of $H_1$, we see that
\[
 \wt{ \mathcal L}=\mathcal L+O \left(L_2^{-2} \right).
 \]
 where $\LL$ is the Melnikov potential introduced in
 \eqref{def:PoincareMelnikovPotential}. Proposition
 \ref{prop:Melnikov} implies that, as long as $\LL_{L_1^0,\Ga^0}^+\neq
 0$, the function $\LL(\ga^0-\om_1\tau)$ has non-degenerate critical
 points with respect to $\tau$. Thanks to the non-degeneracy, the
 function \eqref{def:PoincareFunction} has critical points, which are
 $O(L_2^{-2})$-close to those of $\LL(\ga^0-\om_1\tau)$. Each critical
 point of \eqref{def:PoincareFunction} gives rise to a transversal
 intersections and to an associated scattering map.
\end{proof}




Theorem \ref{thm:NonAveraged} implies that Hamiltonian \eqref{eq:F}
fits in the classical framework of Arnold diffusion along single
resonances: a normally hyperbolic invariant cylinder whose invariant
manifolds intersect transversally \cite{DelshamsLS00, DelshamsLS06a,
  BernardKZ11}.  Nevertheless, the results obtained in this paper are
not enough to obtain unstable orbits drifting along the
cylinder. Indeed, we cannot derive formulas for the scattering map and
therefore we have no information about the outer dynamics beyond the
fact that it is well defined.

The reason is the following. To prove that the invariant manifolds of
the cylinder split, it is enough to project them into a certain plane
and see that they intersect transversally in this plane. But, in order
to build unstable orbits, one needs more precise information on the
scattering map. Namely, to derive a first order approximation of the
scattering map, one needs to analyze how the invariant manifolds of
the cylinder split in every direction (see \cite{DelshamsLS08}), in
particular in the $L_1$ and $L_2$ directions. This would give the size
of the possible jumps that the scattering maps makes in these
directions. That is, how far can be two points in the cylinder which
are connected by a heteroclinic orbit. Since the mean anomalies
$\ell_1$ and $\ell_2$ are rapidly oscillating in our regime, the
transversality in the conjugate directions $L_1$ and $L_2$ is
exponentially small and therefore very difficult to detect.

  
An intermediate step would be to tackle the $1$-averaged Hamiltonian,
where only the fastest mean anomaly $\ell_1$ has been averaged
out. Then, one has a three degree of freedom Hamiltonian system with
only one fast frequency.  As a starter, we provide the expression of
the analogue to the quadrupolar term:
$$-\frac{\mu_1m_2}{\|q_2\|} \int_\TT \sigma_2 P_2(cos\zeta)
\frac{\|q_1\|^2}{\|q_2\|^3} \, d\ell_1 =$$
$$-\frac{3\mu_1m_2}{4\,a_1^2\, {r_2}^3} \left(
  \begin{array}[c]{l}
    \left(\left(4\,{e_1}^2+1\right)\,
      \sin ^2{g_1}+\left(1-\,{e_1}^2
      \right)\,\cos ^2{g_1}\right)\,\cos ^2i\,\sin ^2\left({v_2}+
      {g_2}\right)+\\
    10\,{e_1}^2\,\cos {g_1}\,\sin 
    {g_1}\,\cos i\,\cos \left({v_2}+{g_2}\right)\,\sin 
    \left({v_2}+{g_2}\right)+\\
    \left(\left(1-{e_1}^2\right)\,\sin ^2{g_1}+\left(4\,
        {e_1}^2+1\right)\,\cos ^2{g_1}
    \right)\,\cos ^2\left({v_2}+{g_2}\right)\\
    -{e_1}^2-\frac{2}{3}
  \end{array}
\right).$$
where $v_2\equiv v_2(G_2,L_2,\ell_2)$ is the eccentric anomaly.

Analyzing how the invariant manifolds split either for the full
three-body problem or for the $1$-averaged Hamiltonian, and deriving
formulas for the corresponding scattering maps, would be the major
step towards proving Arnold diffusion in the three body problem in the
lunar regime.

\section{Proof of Proposition \ref{prop:Melnikov}}
\label{sec:Melnikov}%

In Lemma \ref{lm:separatrix}, we have called \((g_1^h,G_1^h,\ga^h =
\gamma^0 + \gamma^1 + \gamma^2,\Gamma^h\equiv \Gamma)\) the separatrix
solution.\footnote{The variable $t$ used in section~\ref{sec:Fquad} is
  a rescaled time, since in Section~\ref{sec:Fquad} we have simplified
  the quadrupolar Hamiltonian by some multiplicative constant, when
  replacing $F_{\quadr}$ by $H_0$.} We compute the potential defined
in \eqref{def:PoincareMelnikovPotential}. To this end, we expand it in
Fourier series in $\gamma^0$. Since $H_2$ is a trigonometric
polynomial of degree $1$ in $\ga$ (or equivalently in $g_2$, see
equation~\eqref{eq:Foct}), it can be written as
\[
  H_2(g_1,G_1,\ga,\Gamma)=H_2^+
  (g_1,G_1,\Gamma)e^{i\ga}+H_2^- (g_1,G_1,\Gamma)e^{-i\ga}. 
\]
Because $\gamma^h = \gamma^0 + \gamma^1(t) + \gamma^2(t)$ depends
linearly on $\gamma^0$, $\LL(\ga^0)$ too has only two harmonics:
\begin{equation}
  \label{eq:L}%
  \LL(\ga_0)= \LL^+e^{i\ga^0}+\LL^-e^{-i\ga^0},
\end{equation}
where
\[
\LL^\pm= \int_{-\infty}^\infty H_2^\pm
(g_1^h(t),G_1^h(t),\Gamma)e^{\pm i (\gamma^1(t) + \gamma^2(t)) }dt.
\]
Since $\LL$ is a real function, it is determined by, say, the positive
harmonic $\LL^+ = \bar\LL^-$.

As a first step, we  parameterize the separatrix by $g_1$ instead
of $t$, using the following lemma (where we omit the upper index
$h$).

\begin{lemma}\label{lemma:IdentitiesOverSeparatrix}
  The following identites are satisfied on the separatrix given by
  Lemma~\ref{lm:separatrix}:
\[
\begin{cases}
  G_1 &=|\Gamma| \sqrt{\frac{5}{3}} \frac{\sin g_1}{\sqrt{1 -
      \frac{5}{3} \cos^2 g_1}}\\
  e_1 &= \sqrt{\frac{2}{3}} \frac{\Gamma}{L_1\chi} \sqrt{\frac{1 -
      \frac{5}{3} (1 + \chi^2) \cos^2 g_1}{1 -
      \frac{5}{3} \cos^2 g_1}}\\
  \cos i &= \sqrt{\frac{3}{5}} \frac{\sqrt{1- \frac{5}{3}
      \cos^2g_1}}{\sin g_1} \\
  \sin i &= \sqrt{\frac{2}{5}} \frac{1}{\sin g_1}\\
  \cos \gamma^2 &=\sqrt{1- \frac{5}{3} \cos^2 g_1}\\
  \sin \gamma^2 &=-\sqrt{\frac{5}{3}} \cos g_1.
\end{cases}
\]
\end{lemma}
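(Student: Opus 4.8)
The plan is to verify each of the six identities in Lemma~\ref{lemma:IdentitiesOverSeparatrix} by a direct computation starting from the defining equation of the separatrix, equation~\eqref{eq:separatrix}, namely $\left(1-\Gamma^2/G_1^2\right)\sin^2 g_1 = \tfrac{2}{5}$, together with the time parameterization established in Lemma~\ref{lm:separatrix}. First I would recover $G_1$ as a function of $g_1$: solving \eqref{eq:separatrix} for $G_1^2$ gives $G_1^2 = \Gamma^2 \sin^2 g_1 / (\sin^2 g_1 - \tfrac{2}{5}) = \Gamma^2 \cdot \tfrac{5}{3}\sin^2 g_1/(1-\tfrac{5}{3}\cos^2 g_1)$, which is exactly \eqref{eq:G1-g1} and yields the first identity after taking the (positive) square root, valid precisely on the range $\sin^2 g_1 > 2/5 \geq 2/(5\tilde\Gamma^2)$ where $1-\tfrac{5}{3}\cos^2 g_1 > 0$.

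Next, for $e_1$ I would use $e_1^2 = 1 - G_1^2/L_1^2$ (equation~\eqref{def:eccentricity} at the first order in $L_2^{-1}$) and substitute the expression for $G_1^2$ just obtained; collecting terms over the common denominator $1-\tfrac{5}{3}\cos^2 g_1$ and recalling the definition $\chi^2 = \tfrac{2}{3}\tfrac{\Gamma^2}{L_1^2}\big/\big(1-\tfrac53\tfrac{\Gamma^2}{L_1^2}\big)$ from \eqref{eq:A2-chi}, one should land on the stated formula $e_1 = \sqrt{2/3}\,\tfrac{\Gamma}{L_1\chi}\sqrt{(1-\tfrac53(1+\chi^2)\cos^2 g_1)/(1-\tfrac53\cos^2 g_1)}$; the numerator factor $1-\tfrac53(1+\chi^2)\cos^2 g_1$ is the same one appearing in the differential equation~\eqref{eq:g1}, so this is consistent. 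For $\cos i$ and $\sin i$, I would use $\cos^2 i = \Gamma^2/G_1^2$ (from \eqref{eq:cosi} at first order) and again substitute; $\cos^2 i = (1-\tfrac53\cos^2 g_1)/(\tfrac53\sin^2 g_1) = \tfrac35(1-\tfrac53\cos^2 g_1)/\sin^2 g_1$ gives the third identity, and $\sin^2 i = 1-\cos^2 i = (\sin^2 g_1 - \tfrac35\sin^2 g_1 + \cos^2 g_1)/\sin^2 g_1$; using $\sin^2 g_1 + \cos^2 g_1 = 1$ this collapses to $\tfrac25/\sin^2 g_1$, the fourth identity. (Equivalently, $\sin^2 i = \tfrac25/\sin^2 g_1$ is just \eqref{eq:separatrix} rewritten via $\sin^2 i = 1-\Gamma^2/G_1^2$.)

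Finally, for $\cos\gamma^2$ and $\sin\gamma^2$ I would start from $\gamma^2 = \arctan(\chi^{-1}\tanh A_2 t)$ in \eqref{eq:gamma-t}, so $\tan\gamma^2 = \chi^{-1}\tanh A_2 t$, hence $\cos^2\gamma^2 = 1/(1+\chi^{-2}\tanh^2 A_2 t) = \chi^2\cosh^2 A_2 t/(\chi^2\cosh^2 A_2 t + \sinh^2 A_2 t)$. Then I would use the relation between $\cos g_1^h(t)$ and $t$ from \eqref{eq:cosg1-t}: squaring gives $\cos^2 g_1 = \tfrac35 \sinh^2 A_2 t/(\chi^2 + (1+\chi^2)\sinh^2 A_2 t)$, so that $1-\tfrac53\cos^2 g_1 = \chi^2(1+\sinh^2 A_2 t)/(\chi^2 + (1+\chi^2)\sinh^2 A_2 t) = \chi^2\cosh^2 A_2 t/(\chi^2\cosh^2 A_2 t + \sinh^2 A_2 t)$, which matches $\cos^2\gamma^2$ exactly; taking signs into account (on the range $g_1\in\,]g_1^{\min},g_1^{\max}[$ one checks $\gamma^2\in\,]-\pi/2,\pi/2[$ so $\cos\gamma^2>0$) gives $\cos\gamma^2 = \sqrt{1-\tfrac53\cos^2 g_1}$. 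For $\sin\gamma^2$, either use $\sin\gamma^2 = \tan\gamma^2\cos\gamma^2$ and the formula $\tanh A_2 t = -\sqrt{5/3}\cos g_1^h(t)/\sqrt{1+\chi^{-2}\cdot(\text{stuff})}$ implied by \eqref{eq:cosg1-t}, or more cleanly compute $\sin^2\gamma^2 = 1-\cos^2\gamma^2 = \tfrac53\cos^2 g_1$ and fix the sign from \eqref{eq:cosg1-t} (which shows $\sinh A_2 t$ and $\cos g_1^h$ have opposite signs, while $\sin\gamma^2$ has the sign of $\tanh A_2 t$), obtaining $\sin\gamma^2 = -\sqrt{5/3}\cos g_1$. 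I do not expect a genuine obstacle here; the computation is elementary algebra plus hyperbolic-trigonometric bookkeeping. The only point requiring care is consistent sign conventions: tracking the correct branch of the square roots on the separatrix interval $]g_1^{\min}, g_1^{\max}[$ (where $\cos g_1$ changes sign at $g_1 = \pi/2$, $t=0$) and verifying that $\gamma^2$ indeed stays in $]-\pi/2, \pi/2[$ so that the $\arctan$ in \eqref{eq:gamma-t} is unambiguous.
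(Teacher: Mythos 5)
Your proposal is correct and takes exactly the route the paper indicates: the paper's entire proof is the one-line remark that the lemma follows directly from Lemma~\ref{lm:separatrix} and formulas~\eqref{def:eccentricity}, \eqref{eq:cosi}, \eqref{eq:sini}, and you carry out precisely that algebra, including the correct sign bookkeeping for $\gamma^2$. One harmless slip: the chain $\sin^2 g_1 > 2/5 \geq 2/(5\tilde\Gamma^2)$ has the second inequality reversed (since $\tilde\Gamma^2<1$ forces $2/(5\tilde\Gamma^2)>2/5$); the correct observation is simply that the separatrix condition $\sin^2 g_1 > 2/(5\tilde\Gamma^2)$ already implies $\sin^2 g_1 > 2/5$, hence $1-\tfrac{5}{3}\cos^2 g_1>0$ on the relevant range.
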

The proof of this lemma is a direct consequence of Lemma \ref{lm:separatrix}
and formulas \eqref{def:eccentricity}, \eqref{eq:cosi}, \eqref{eq:sini}.
We can use this lemma to express the function
$\FF^+=H_2^{+}e^{i\ga_2}$ on the separatrix as a function of $g_1$.

\begin{lemma}\label{lemma:FFasg1function}
  The function $\FF^+$ can be written, on the separatrix, as a
  function of $g_1$ as $\FF^+=\frac{1}{2}(\FF_1+i\FF_2)$ with
  \[
  \begin{cases}
    \FF_1&= C_1 \frac{\sqrt{1-\frac{5}{3}\,\left(1+\chi^2\right) \,{\cos
          g_1}^2}}{1-\frac{5}{3}\,{\cos g_1}^2} \cos g_1 \left(1 -
      \frac{5}{3} \frac{1- \frac{11}{7}
        \frac{\Gamma^2}{L_1^2}}{1-\frac{5}{3}\frac{\Gamma^2}{L_1^2}}
      \cos
      g_1^2\right)\\
    \FF_2&=C_2\frac{\sqrt{1-\frac{5}{3}(1+\chi^2)\cos^2
        g_1}}{\sqrt{1-\frac{5}{3}\cos^2
        g_1}}\left(
      \begin{array}[c]{l}
        \frac{5}{3}\frac{\Ga^2}{L_1^2}\frac{\cos^2
          g_1(9-11\cos^2 g_1)}{1-\frac{5}{3}\cos^2
          g_1}+\\
        \frac{21}{5}-5\frac{\Ga^2}{L_1^
          2}-\left(14-11\frac{\Ga^2}{L_1^2}\right)\cos^2 g_1
      \end{array}
    \right)
  \end{cases}
  \]
  and
\[
C_1=\frac{105}{32}\frac{a_1^3}{a_2^4}\frac{e_2}{(1-e_2^2)^{5/2}}\left(1-\frac{5}
{3}\frac{\Ga^2}{L_1^2}\right)^{3/2} \quad \mbox{and} \quad 
C_2=\frac{15}{64}\sqrt{\frac{5}{3}}\frac{a_1^3}{a_2^4}\frac{e_2}{(1-e_2^2)^{5/2}
}\left(1-\frac{5}{3}\frac{\Ga^2}{L_1^2}\right)^{1/2}.
\]
\end{lemma}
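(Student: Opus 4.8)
The plan is to unwind $\FF^+=H_2^+e^{i\ga^2}$ into an explicit function of $g_1$ in three steps: extract $H_2^+$ from the octupolar term, feed in the separatrix parameterization, and simplify. By~\eqref{eq:Fav} and Lemma~\ref{lemma:SecularExpansion}, $H_2$ is --- up to the multiplicative constant absorbed in the time rescaling of Section~\ref{sec:Fquad} --- the leading order in $L_2^{-1}$ of $F_{\oct}$ in~\eqref{eq:Foct}, with $e_2$ frozen at its first-order value. Since $\ga=-g_2$, substituting $\cos g_2=\cos\ga$, $\sin g_2=-\sin\ga$ presents $F_{\oct}$ as a trigonometric polynomial of degree $1$ in $\ga$; writing $\cos\ga=\tfrac12(e^{i\ga}+e^{-i\ga})$, $\sin\ga=\tfrac1{2i}(e^{i\ga}-e^{-i\ga})$ and reading off the $e^{i\ga}$-coefficient gives
\[
 H_2^+=-\frac{15}{128}\frac{a_1^3}{a_2^4}\frac{e_1e_2}{(1-e_2^2)^{5/2}}\Bigl(\cos g_1\,B_1-i\,\sin g_1\cos i\,B_2\Bigr),
\]
where $B_1,B_2$ denote the two bracketed factors of~\eqref{eq:Foct} with $G_1^2/L_1^2$ everywhere replaced by $1-e_1^2$.

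Next I would multiply by $e^{i\ga^2}=\cos\ga^2+i\sin\ga^2$ and use Lemma~\ref{lemma:IdentitiesOverSeparatrix}. The key point is that two of its identities --- namely $\sin g_1\cos i=\sqrt{\tfrac35}\cos\ga^2$ and $\cos g_1=-\sqrt{\tfrac35}\sin\ga^2$, both immediate rearrangements of that list --- make the mixed products collapse: the real part of $(\cos g_1 B_1-i\sin g_1\cos i\,B_2)e^{i\ga^2}$ equals $\cos\ga^2\cos g_1\,(B_1-B_2)$, and its imaginary part equals $-\sqrt{\tfrac35}\,(B_1\sin^2\ga^2+B_2\cos^2\ga^2)$. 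Hence $\FF_1=2\Re\FF^+$ and $\FF_2=2\Im\FF^+$ are
\[
 \FF_1=-\frac{15}{64}\frac{a_1^3}{a_2^4}\frac{e_1e_2}{(1-e_2^2)^{5/2}}\cos\ga^2\cos g_1\,(B_1-B_2),
\]
\[
 \FF_2=\frac{15}{64}\sqrt{\tfrac35}\,\frac{a_1^3}{a_2^4}\frac{e_1e_2}{(1-e_2^2)^{5/2}}\,(B_1\sin^2\ga^2+B_2\cos^2\ga^2).
\]

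It then remains to make these rational in $c=\cos^2 g_1$. Using $\sin^2 i\,\sin^2 g_1=\tfrac25$ and $1-e_1^2=G_1^2/L_1^2=\tfrac53\tfrac{\Ga^2}{L_1^2}\tfrac{1-c}{1-\frac53 c}$ from Lemma~\ref{lemma:IdentitiesOverSeparatrix}, one gets $B_1=\tfrac53\tfrac{\Ga^2}{L_1^2}\tfrac{9-11c}{1-\frac53 c}-7$ and $B_2=\tfrac53\tfrac{\Ga^2}{L_1^2}\tfrac{11c-5}{1-\frac53 c}+7$; together with $\cos\ga^2=\sqrt{1-\tfrac53 c}$, $\sin^2\ga^2=\tfrac53 c$, $\cos^2\ga^2=1-\tfrac53 c$, and the separatrix value $e_1=\sqrt{1-\tfrac53\tfrac{\Ga^2}{L_1^2}}\,\sqrt{(1-\tfrac53(1+\chi^2)c)/(1-\tfrac53 c)}$ (from Lemma~\ref{lemma:IdentitiesOverSeparatrix} and the definition of $\chi$ in~\eqref{eq:A2-chi}), this produces the stated square-root factor $\sqrt{1-\tfrac53(1+\chi^2)c}$, while the remaining constant $\sqrt{1-\tfrac53\tfrac{\Ga^2}{L_1^2}}$ in $e_1$ combines with the numerical coefficients into $C_1$ and $C_2$ --- the factor $\tfrac53$ relating $B_1\sin^2\ga^2+B_2\cos^2\ga^2$ to the bracket appearing in $\FF_2$ being precisely what converts $\sqrt{3/5}$ into $\sqrt{5/3}$, since $\tfrac53\sqrt{3/5}=\sqrt{5/3}$. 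Plugging everything in gives $\FF_1,\FF_2$ in the announced form.

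The computation needs no idea beyond Lemmas~\ref{lm:separatrix} and~\ref{lemma:IdentitiesOverSeparatrix}; the one real difficulty is the length and delicacy of the last step --- keeping track of the recurring $\sqrt{3/5}$ factors, reconciling the $\sin^2 g_1$ in the denominator of $\sin^2 i$ against the $\sin g_1$ multiplying $B_2$, and pinning down the constants $\tfrac{105}{32}$ and $\tfrac{15}{64}\sqrt{5/3}$. It is cleanest to carry out all the polynomial algebra in the single variable $c=\cos^2 g_1$, so that the only surviving square root is the overall $\sqrt{1-\tfrac53(1+\chi^2)c}$ carried by $e_1$; this can (and should) be double-checked with a computer algebra system, cross-referencing the intermediate form of $F_{\oct}^+$.
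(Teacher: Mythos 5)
Your proposal is correct and follows essentially the same route as the paper: extract the $e^{i\gamma}$-harmonic of $H_2$ from~\eqref{eq:Foct}, multiply by $e^{i\gamma^2}$, and substitute the separatrix identities of Lemma~\ref{lemma:IdentitiesOverSeparatrix}. Your $B_1,B_2$ coincide with the paper's $A,B$, and your expressions for $\FF_1,\FF_2$ (including the $\frac53$ factor converting $\sqrt{3/5}$ into $\sqrt{5/3}$, and the resulting constants $\tfrac{105}{32}$ and $\tfrac{15}{64}\sqrt{5/3}$) match what the paper's proof asserts for $A_{\oct}S_1$ and $A_{\oct}S_2$. The one point where you go slightly beyond the paper is in making explicit the structural collapse: by rewriting $\sin g_1\cos i=\sqrt{3/5}\,\cos\gamma^2$ and $\cos g_1=-\sqrt{3/5}\,\sin\gamma^2$, you show cleanly why $\Re\FF^+$ reduces to $\cos\gamma^2\cos g_1\,(A-B)$ and $\Im\FF^+$ to $-\sqrt{3/5}\,(A\sin^2\gamma^2+B\cos^2\gamma^2)$; the paper simply introduces $S_1,S_2$ and states their final rational forms without displaying this intermediate step. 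That is a helpful clarification, not a different method.
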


\begin{proof}
We have 
\[
H_2=A_{\oct}e_1\left(\cos g_1\cos\ga \, A+\sin g_1\sin\ga \cos i
  \, B\right)
\]
where
\[
\begin{cases}
  A=\frac{G_1^2}{L_1^2}\left(5\sin^2 i(-7\cos^2 g_1 +6)-3\right)-35\sin^2 g_1 
\sin^2 i+7\\
  B=\frac{G_1^2}{L_1^2}\left(5\sin^2 i(7\cos^2 g_1 -4)+3\right)+35\sin^2 g_1 
\sin^2 i-7 
\end{cases}
\]
and 
\[A_{\oct}=- \frac{15}{64} \frac{a_1^3}{a_2^4} \frac{e_2}{(1-e_2^2)^{5/2}}.\]
Thus,
\[
\begin{split}
 \FF^+=\frac{A_{\oct}e_1}{2}\Big[&\left(A\cos g_1\cos \ga_2 +B\sin
   g_1\sin\ga_2\cos i \right)\\ 
 &+i \left(A\cos g_1\sin\ga_2 -B\sin g_1\cos\ga_2\cos i \right)\Big],
\end{split}
\]
which we want to express in terms of $g_1$. Using Lemma
\ref{lemma:IdentitiesOverSeparatrix}, the functions  $A$ and $B$ can be written
as
\[
\begin{cases}
 A=\frac{5}{3}\frac{\Ga^2}{L_1^2}\frac{1}{1-\frac{5}{3}\cos^2 g_1}(9-11\cos^2
g_1)-7\\
 B=-\frac{5}{3}\frac{\Ga^2}{L_1^2}\frac{1}{1-\frac{5}{3}\cos^2 g_1}(5-11\cos^2
g_1)+7.
\end{cases}
\]
Let
\[
\begin{cases}
 S_1=e_1(\cos g_1\cos \ga_2 A+\sin g_1\sin\ga_2\cos i B)\\
 S_2=e_1(\cos g_1\sin\ga_2 A-\sin g_1\cos\ga_2\cos i B).
\end{cases}
\]
Then,
\[
S_1=-\frac{14\Ga}{L_1\chi}\sqrt{\frac{2}{3}}\left(1-\frac{5}{3}\frac{\Ga^2}{
L_1^2}\right)
\frac{\sqrt{1-\frac{5}{3}(1+\chi^2)\cos^2 g_1}}{1-\frac{5}{3}\cos^2 g_1}\cos
g_1\left(1-\frac{5}{3}\frac{1-\frac{11}{7}
    \frac{\Ga^2}{L_1^2}}{1-\frac{5}{3}\frac{\Ga^2}{L_1^2}}\cos^2 g_1\right)
\]
and 
\[
\begin{split}
  S_2=&-\frac{\sqrt{10}}{3}\frac{\Ga}{L_1\chi}
  \frac{\sqrt{1-\frac{5}{3}(1+\chi^2)\cos^2
      g_1}}{\sqrt{1-\frac{5}{3}\cos^2 g_1}}\times \\
  & \left(\frac{5}{3}\frac{\Ga^2}{L_1^2} \frac{\cos^2 g_1(9-11\cos^2
      g_1)}{1-\frac{5}{3}\cos^2 g_1}+\frac{21}{5}-5\frac{\Ga^2}{L_1^
      2}-\left(14-11\frac{\Ga^2}{L_1^2}\right)\cos^2 g_1\right).
\end{split}
\]
\end{proof}

Now we compute $\FF^+$ as a function of $t$. Recall that the constant
$A_2$ was defined in \eqref{eq:A2-chi}.

\begin{lemma}\label{lemma:FFastaufunction}
  The functions $\FF_1$ and $\FF_2$ on the heteroclinic (cf. Lemma
  \ref{lemma:FFasg1function}), can be written as functions of $\tau =
  A_2 t$ as
\[
\begin{cases}
\FF_1=& \wt C_{1}\frac{\sinh\tau}{1+\sinh^2\tau}\cdot\frac{7+6\sinh^2\tau}{\chi^2+(1+\chi^2)\sinh^2\tau}\\
\FF_2=&C_2\frac{1}{\cosh \tau}\left[\frac{21}{5}-5\frac{\Ga^2}{L_1^2}+\right.\\
&\left.\frac{\sinh^2\tau}{\chi^2+(1+\chi^2)\sinh^2\tau}\left(\frac{\Ga^2}{L_1^2\chi^2}\frac{9\chi^2+\left(\frac{12}{5}+9\chi^2\right)\sinh^2\tau}{\cosh^2\tau}-\frac{3}{5}\left(14-11\frac{\Ga^2}{L_1^2}\right)\right)\right],
\end{cases}
\]
with
\[
\wt C_1=- \frac{15}{16\sqrt{10}}\frac{a_1^3}{a_2^4}\frac{e_2}{(1-e_2^2)^{5/2}}\frac{\Gamma}{L_1}\left(1-\frac{5}{3}\frac{\Gamma^2}{L_1^2}\right).
\]
\end{lemma}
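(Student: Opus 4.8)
The plan is to substitute the explicit time-parameterization of the separatrix from Lemma~\ref{lm:separatrix} into the $g_1$-expressions of $\FF_1$ and $\FF_2$ obtained in Lemma~\ref{lemma:FFasg1function}, and simplify. Concretely, I would start from the two formulas in Lemma~\ref{lemma:FFasg1function} and use the identity $\cos^2 g_1^h(t) = \tfrac{3}{5}\,\dfrac{\sinh^2 A_2t}{\chi^2 + (1+\chi^2)\sinh^2 A_2t}$, which follows by squaring \eqref{eq:cosg1-t}. Setting $\tau = A_2 t$, this gives
\[
1 - \tfrac{5}{3}\cos^2 g_1^h = \frac{\chi^2 + \chi^2 \sinh^2\tau}{\chi^2 + (1+\chi^2)\sinh^2\tau} = \frac{\chi^2 \cosh^2\tau}{\chi^2 + (1+\chi^2)\sinh^2\tau},
\]
so the awkward radical $\sqrt{1-\tfrac{5}{3}\cos^2 g_1}$ becomes $\chi\cosh\tau$ divided by $\sqrt{\chi^2+(1+\chi^2)\sinh^2\tau}$, and similarly one computes $1-\tfrac{5}{3}(1+\chi^2)\cos^2 g_1^h = \dfrac{\chi^2}{\chi^2+(1+\chi^2)\sinh^2\tau}$, whose square root is just $\chi/\sqrt{\chi^2+(1+\chi^2)\sinh^2\tau}$.

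For $\FF_1$: plugging these in, the prefactor $\dfrac{\sqrt{1-\tfrac53(1+\chi^2)\cos^2 g_1}}{1-\tfrac53\cos^2 g_1}\cos g_1$ collapses to something proportional to $\dfrac{\sinh\tau}{\cosh^2\tau}\cdot\dfrac{\sqrt{\chi^2+(1+\chi^2)\sinh^2\tau}}{\chi^2}$, times the sign from \eqref{eq:cosg1-t}; the remaining bracket $\bigl(1 - \tfrac{5}{3}\,\tfrac{1-\frac{11}{7}\Gamma^2/L_1^2}{1-\frac53\Gamma^2/L_1^2}\cos^2 g_1\bigr)$ has to be expanded using the $\cos^2 g_1^h$ substitution and, after clearing the common denominator $\chi^2+(1+\chi^2)\sinh^2\tau$ and using the relation $\chi^2 = \tfrac{2}{3}\tfrac{\Gamma^2/L_1^2}{1-\frac53\Gamma^2/L_1^2}$ from \eqref{eq:A2-chi}, it should simplify to a multiple of $7 + 6\sinh^2\tau$. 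Combining constants — here one must be careful tracking the factor $-\tfrac{14\Gamma}{L_1\chi}\sqrt{2/3}(1-\tfrac53\Gamma^2/L_1^2)$ sitting in front of $S_1$ and absorbing the remaining powers of $\chi$ — yields $\FF_1 = \wt C_1\,\dfrac{\sinh\tau}{1+\sinh^2\tau}\cdot\dfrac{7+6\sinh^2\tau}{\chi^2+(1+\chi^2)\sinh^2\tau}$ with the stated $\wt C_1$, after noting $1+\sinh^2\tau = \cosh^2\tau$. For $\FF_2$: the prefactor $\dfrac{\sqrt{1-\tfrac53(1+\chi^2)\cos^2 g_1}}{\sqrt{1-\tfrac53\cos^2 g_1}}$ becomes $\dfrac{1}{\cosh\tau}$ (the $\sqrt{\chi^2+(1+\chi^2)\sinh^2\tau}$ and $\chi$ factors cancel), so the $C_2$ prefactor is exactly right; the three-term bracket must then be rewritten, using $\cos^2 g_1^h = \tfrac35\,\tfrac{\sinh^2\tau}{\chi^2+(1+\chi^2)\sinh^2\tau}$ and $\tfrac{1}{1-\frac53\cos^2 g_1^h} = \tfrac{\chi^2+(1+\chi^2)\sinh^2\tau}{\chi^2\cosh^2\tau}$, to produce the constant term $\tfrac{21}{5}-5\tfrac{\Gamma^2}{L_1^2}$ plus the $\dfrac{\sinh^2\tau}{\chi^2+(1+\chi^2)\sinh^2\tau}$-times-bracket displayed in the lemma; the internal rational function in $\sinh^2\tau/\cosh^2\tau$ comes from expanding $\cos^2 g_1(9-11\cos^2 g_1)$ and again invoking $\chi^2 = \tfrac23\tfrac{\Gamma^2/L_1^2}{1-\frac53\Gamma^2/L_1^2}$.

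The main obstacle is purely computational bookkeeping: there are several competing denominators ($1-\tfrac53\cos^2 g_1$, its square, $\chi^2+(1+\chi^2)\sinh^2\tau$, $\cosh^2\tau$) and one must repeatedly convert between $\Gamma^2/L_1^2$ and $\chi^2$ via $\chi^2(1-\tfrac53\Gamma^2/L_1^2) = \tfrac23\Gamma^2/L_1^2$, i.e. $1+\tfrac53\chi^2 = \tfrac{1}{1-\frac53\Gamma^2/L_1^2}$, without sign errors. I would organize the computation by first rewriting every instance of $\cos^2 g_1^h$ and its radicals in terms of $u := \sinh^2\tau$ and the single parameter $\chi$, reducing each of $\FF_1,\FF_2$ to a ratio of polynomials in $u$, then matching numerator and denominator degrees against the claimed expressions; a symbolic check at, say, $\tau=0$ and the leading asymptotics as $\tau\to\infty$ provides a cheap consistency test for the constants $\wt C_1$ and $C_2$. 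Since $\FF^+ = H_2^+ e^{i\gamma^2} = \tfrac12(\FF_1 + i\FF_2)$ by Lemma~\ref{lemma:FFasg1function} and this is exactly what enters $\LL^+ = \int_{-\infty}^{\infty} \FF^+ e^{i\gamma^1(t)}\,dt$, no further conceptual input is needed here — the payoff (computing the residue integral to get $\LL^+$ and hence prove Proposition~\ref{prop:Melnikov}) is deferred to the subsequent lemmas.
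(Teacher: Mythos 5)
Your proposal is correct and follows exactly the paper's proof: substitute the time-parameterization from Lemma~\ref{lm:separatrix} (in particular $\cos^2 g_1^h(t)=\frac{3}{5}\frac{\sinh^2\tau}{\chi^2+(1+\chi^2)\sinh^2\tau}$, $\tau=A_2t$) into the $g_1$-expressions of Lemma~\ref{lemma:FFasg1function}, simplify the three resulting rational expressions in $\sinh^2\tau$, and match constants via $\chi^2\bigl(1-\frac53\frac{\Gamma^2}{L_1^2}\bigr)=\frac23\frac{\Gamma^2}{L_1^2}$.

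Two small algebraic slips in your sketch are worth fixing before you run the computation. First, the $\FF_1$ prefactor actually collapses all the way to $-\sqrt{3/5}\,\dfrac{\sinh\tau}{\chi\cosh^2\tau}$: the two factors of $\sqrt{\chi^2+(1+\chi^2)\sinh^2\tau}$ (one from $\sqrt{1-\frac53(1+\chi^2)\cos^2g_1}$, one from $\cos g_1$ itself) exactly cancel the $\chi^2+(1+\chi^2)\sinh^2\tau$ coming from the denominator $1-\frac53\cos^2 g_1$, so there is no residual $\sqrt{\chi^2+(1+\chi^2)\sinh^2\tau}/\chi^2$; the remaining $\chi^2+(1+\chi^2)\sinh^2\tau$ in the final formula then comes entirely from the bracket, as you anticipate. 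Second, your paraphrase $1+\frac53\chi^2=\frac{1}{1-\frac53\Gamma^2/L_1^2}$ is not an identity (the correct related fact is $1+\chi^2=\frac{1-\Gamma^2/L_1^2}{1-\frac53\Gamma^2/L_1^2}$); the form you wrote first, $\chi^2(1-\frac53\Gamma^2/L_1^2)=\frac23\Gamma^2/L_1^2$, is the one to use. Neither slip affects the viability of the plan, and the checks you propose at $\tau=0$ and as $\tau\to\infty$ would catch them.
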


\begin{proof}
We compute each term as a function of $\tau$. By \eqref{eq:cosg1-t}, we have that
\[
 \sqrt{1-\frac{5}{3}(1+\chi^2)\cos g_1^0(t)}=\frac{\chi}{\sqrt{\chi^2+(1+\chi^2)\sinh^2\tau}}
\]
and
\[
 1-\frac{5}{3}\cos g_1^0(t)=\chi^2\frac{1+\sinh^2\tau}{\chi^2+(1+\chi^2)\sinh^2\tau}.
\]
So, 
\[
\frac{ \sqrt{1-\frac{5}{3}(1+\chi^2)\cos g_1^0(t)}\cos g_1^0(t)}{ 1-\frac{5}{3}\cos g_1^0(t)}=\frac{1}{\chi}\sqrt{\frac{3}{5}}\frac{\sinh\tau}{1+\sinh^2\tau}.
\]
For the last term, we have
\[
 \left(1 -
    \frac{5}{3} \frac{1- \frac{11}{7}
      \frac{\Gamma^2}{L_1^2}}{1-\frac{5}{3}\frac{\Gamma^2}{L_1^2}} \cos^2
    g_1^0(t)\right)=\frac{\left(1-\frac{5}{3}\frac{\Gamma^2}{L_1^2}\right)\chi^2+M\sinh^2\tau}{\left(1-\frac{5}{3}\frac{\Gamma^2}{L_1^2}\right)\left(\chi^2+(1+\chi^2)\sinh^2\tau\right)}
\]
where 
\[
 M=\left(1-\frac{5}{3}\frac{\Gamma^2}{L_1^2}\right)(1+\chi^2)-\left(1-\frac{11}{7}\frac{\Gamma^2}{L_1^2}\right).
\]
Using the definition of $\chi$ in \eqref{eq:A2-chi}, we have that
\[
 \begin{split}
  \left(1-\frac{5}{3}\frac{\Gamma^2}{L_1^2}\right)\chi^2&=\frac{2}{3}\frac{\Gamma^2}{L_1^2}\\
 M&=\frac{4}{7}\frac{\Gamma^2}{L_1^2}.
 \end{split}
\]
Therefore, 
\[
 \left(1 -
    \frac{5}{3} \frac{1- \frac{11}{7}
      \frac{\Gamma^2}{L_1^2}}{1-\frac{5}{3}\frac{\Gamma^2}{L_1^2}} \cos^2  
g_1^0(t)\right)=\frac{2\Gamma^2}{21L_1^2\left(1-\frac{5}{3}\frac{\Gamma^2}{L_1^2
}\right)}\frac{7+6\sinh^2\tau}{\chi^2+(1+\chi^2)\sinh^2\tau}.
\]
Putting all these formulas together and defining
\[
 \wt C_1=\frac{2}{21\chi}C_1\sqrt{\frac{3}{5}}\frac{\Gamma^2}{L_1^2}\frac{1}{1-\frac{5}{3}\frac{\Gamma^2}{L_1^2}}.
\]
we complete the proof.
\end{proof}
The formulas of Lemma \ref{lemma:FFastaufunction} allow us to compute  the
Poincar{\'e} - Melnikov potential \eqref{eq:L}. We split it as 
as $\LL^+=\LL_1+i\LL_2$ with
\begin{equation}\label{def:Lj}
 \LL_j=\frac{1}{2}\int_\RR \FF_j(t)e^{i\ga^1(t)}\,dt,\,\,\,j=1,2.
\end{equation}
We first compute 
\begin{eqnarray*}
  \LL_1&= &\wt C_1 \int_{-\infty}^\infty
\frac{\sinh(A_2t)}{1+\sinh^2(A_2t)}\cdot\frac{7+6\sinh^2(A_2t)}{
\chi^2+(1+\chi^2)\sinh^2(A_2t)}e^{- i\frac{2\Gamma}{L_1^2}t }dt\\
  &=&\frac{\wt C_1}{A_2}\int_{-\infty}^\infty
\frac{\sinh\tau}{1+\sinh^2\tau}\cdot\frac{7+6\sinh^2\tau}{
\chi^2+(1+\chi^2)\sinh^2\tau}e^{- i\frac{2\Gamma}{A_2L_1^2}\tau }d\tau.
\end{eqnarray*}
using the residue theorem. First note that if we look at this
integral along the path $\tau=-i\pi+\sigma$, $\sigma\in\RR$, instead
of the real line, we get
\[
\begin{split}
\int_{-i\pi+\RR}&
\frac{\sinh\tau}{1+\sinh^2\tau}\cdot\frac{7+6\sinh^2\tau}{
\chi^2+(1+\chi^2)\sinh^2\tau}e^{- i\frac{2\Gamma}{A_2L_1^2}\tau }d\tau\\
&=-e^{-\frac{2\Gamma\pi}{A_2L_1^2}}\int_{\RR}
\frac{\sinh\sigma}{1+\sinh^2\sigma}\cdot\frac{7+6\sinh^2\sigma}{
\chi^2+(1+\chi^2)\sinh^2\sigma}e^{- i\frac{2\Gamma}{A_2L_1^2}\sigma }d\sigma. 
\end{split}
\]
So, if we define  the function 
\begin{equation}\label{def:IntegrandReal}
f_1(\tau)=\frac{\sinh\tau}{1+\sinh^2\tau}\cdot\frac{7+6\sinh^2\tau}{
\chi^2+(1+\chi^2)\sinh^2\tau}e^{- i\frac{2\Gamma}{A_2L_1^2}\tau },
\end{equation}
Cauchy's integral formula shows
\begin{equation}
  \label{eq:L1}
  \LL_1 =-\frac{2\pi i}{1+e^{-\frac{2\Gamma\pi}{A_2L_1^2}}}\sum
\mathrm{Residues}
\end{equation}
where the summation stands over the residues of the function $f_1$ in
$-\pi<\Im \tau<0$; the negative sign comes from the index of the
curve.

\begin{lemma}\label{lemma:ResiduesReal}
  The function $f_1$ defined in \eqref{def:IntegrandReal} has three
  singularities in the region $-\pi<\Im \tau<0$ given by
  \[
  \begin{cases}
    a_0^-=-i\frac{\pi}{2}\\
    a_1^-=-i\arcsin\frac{\chi}{\sqrt{1+\chi^2}}\\
    a_2^-=-i\left(\pi+\arcsin\frac{\chi}{\sqrt{1+\chi^2}}\right)
  \end{cases}
  \]
  and the associated residues are 
  \[
  \begin{cases}
    \Res\left(f, a_0^-\right)=-\frac{2\Gamma}{A_2^2}e^{-
\frac{\pi\Gamma}{A_2L_1^2} }\\
    \Res\left(f, a_1^-\right)=\frac{7+\chi^2}{2\sqrt{1+\chi^2}}e^{-
\frac{2\Gamma}{A_2L_1^2} \arcsin\frac{\chi}{\sqrt{1+\chi^2}}}\\
    \Res\left(f, a_2^-\right)=-\frac{7+\chi^2}{2\sqrt{1+\chi^2}}e^{-
\frac{2\Gamma}{A_2L_1^2}\left(\pi-\arcsin\frac{\chi}{\sqrt{1+\chi^2}}\right) }.
  \end{cases}
  \]
\end{lemma}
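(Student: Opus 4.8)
The plan is to read off the poles and residues of $f_1$ directly, everything being elementary once the heavy algebra of Lemmas~\ref{lemma:FFasg1function}--\ref{lemma:FFastaufunction} is behind us. Write $\nu=\frac{2\Gamma}{A_2L_1^2}$ and use $1+\sinh^2\tau=\cosh^2\tau$, so that
\[
 f_1(\tau)=\frac{\sinh\tau\,(7+6\sinh^2\tau)}{\cosh^2\tau\,\bigl(\chi^2+(1+\chi^2)\sinh^2\tau\bigr)}\,e^{-i\nu\tau}.
\]
Since $e^{-i\nu\tau}$ is entire, the only possible poles of $f_1$ are zeros of $\cosh^2\tau$ and of $\chi^2+(1+\chi^2)\sinh^2\tau$. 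First I would check that the numerator never vanishes there: at a zero of $\cosh\tau$ one has $\sinh^2\tau=-1$, hence $\sinh\tau\neq0$ and $7+6\sinh^2\tau=1$; at a zero of $\chi^2+(1+\chi^2)\sinh^2\tau$ one has $\sinh^2\tau=-\frac{\chi^2}{1+\chi^2}$, hence $\sinh\tau\neq0$ and $7+6\sinh^2\tau=\frac{7+\chi^2}{1+\chi^2}\neq0$. So the pole orders equal the vanishing orders of the denominator: a double pole wherever $\cosh\tau=0$ and simple poles wherever $\chi^2+(1+\chi^2)\sinh^2\tau=0$.

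Next I would locate these poles in the strip $-\pi<\Im\tau<0$. The zeros of $\cosh\tau$ form the set $\frac{i\pi}{2}+i\pi\ZZ$, of which only $-\frac{i\pi}{2}=a_0^-$ lies in the strip. For the second factor, put $\tau=x+iy$; then $\sinh\tau=\sinh x\cos y+i\cosh x\sin y$, and $\sinh^2\tau$ can equal the negative real number $-\frac{\chi^2}{1+\chi^2}$ only if $\sinh\tau$ is purely imaginary, i.e. $\sinh x\cos y=0$. The case $\cos y=0$ is excluded because then $|\sinh\tau|=\cosh x\geq1>\frac{\chi}{\sqrt{1+\chi^2}}$; hence $x=0$ and $\sin^2 y=\frac{\chi^2}{1+\chi^2}$, and for $y\in\,]-\pi,0[$ this yields exactly $y=-\arcsin\frac{\chi}{\sqrt{1+\chi^2}}$ and $y=-\pi+\arcsin\frac{\chi}{\sqrt{1+\chi^2}}$, i.e. $a_1^-$ and $a_2^-$. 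The three points are pairwise distinct because $\arcsin\frac{\chi}{\sqrt{1+\chi^2}}\in\,]0,\frac{\pi}{2}[$, and $a_1^-,a_2^-$ are simple zeros of $\chi^2+(1+\chi^2)\sinh^2\tau$ since its derivative $2(1+\chi^2)\sinh\tau\cosh\tau$ does not vanish there. This proves the first assertion.

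For the residues at the simple poles I would use $\Res_{\tau=a}\frac{P(\tau)}{Q(\tau)}=\frac{P(a)}{Q'(a)}$ with $Q(\tau)=\chi^2+(1+\chi^2)\sinh^2\tau$, $Q'(\tau)=2(1+\chi^2)\sinh\tau\cosh\tau$ and $P(\tau)=\frac{\sinh\tau(7+6\sinh^2\tau)}{\cosh^2\tau}e^{-i\nu\tau}$, which gives $\Res(f_1,a)=\frac{7+6\sinh^2 a}{2(1+\chi^2)\cosh^3 a}\,e^{-i\nu a}$. Substituting $\sinh^2 a=-\frac{\chi^2}{1+\chi^2}$, $7+6\sinh^2 a=\frac{7+\chi^2}{1+\chi^2}$, and $\cosh a_1^-=\frac1{\sqrt{1+\chi^2}}$, $\cosh a_2^-=-\frac1{\sqrt{1+\chi^2}}$, the powers of $1+\chi^2$ cancel and leave $\pm\frac{7+\chi^2}{2\sqrt{1+\chi^2}}\,e^{-i\nu a_{1,2}^-}$; since $a_1^-=-i\arcsin\frac{\chi}{\sqrt{1+\chi^2}}$ and $a_2^-=-i\bigl(\pi-\arcsin\frac{\chi}{\sqrt{1+\chi^2}}\bigr)$, the exponentials become the real factors stated in the lemma, and the opposite signs of $\cosh a_1^-$ and $\cosh a_2^-$ produce the sign difference between the two residues.

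The only delicate point is the double pole at $a_0^-=-\frac{i\pi}{2}$, where a short Laurent expansion is needed. I would set $\tau=-\frac{i\pi}{2}+s$ and use $\sinh\tau=-i\cosh s$, $\cosh\tau=-i\sinh s$ to write $f_1=g(s)\,e^{-\nu\pi/2}\,e^{-i\nu s}$ with $g(s)=\frac{i\cosh s}{\sinh^2 s}\cdot\frac{7-6\cosh^2 s}{\chi^2-(1+\chi^2)\cosh^2 s}$. The key observation is that $g$ is an \emph{even} function of $s$, so its Laurent expansion contains no $s^{-1}$ term; moreover $g(s)=-\frac{i}{s^2}+O(1)$, since $\lim_{s\to0}\frac{s^2\,i\cosh s}{\sinh^2 s}=i$ and $\frac{7-6}{\chi^2-(1+\chi^2)}=-1$. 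Hence the $s^{-1}$ coefficient of $g(s)e^{-i\nu s}$ comes only from the cross term $\bigl(-\frac{i}{s^2}\bigr)(-i\nu s)=-\frac{\nu}{s}$, so $\Res(f_1,a_0^-)=-\nu\,e^{-\nu\pi/2}=-\frac{2\Gamma}{A_2L_1^2}\,e^{-\pi\Gamma/(A_2L_1^2)}$ (equivalently $-\frac{\chi}{2}e^{-\pi\chi/4}$, using $\chi A_2=\frac{4\Gamma}{L_1^2}$). The remaining work is pure bookkeeping: tracking the branch of $\arcsin$ and the signs of the hyperbolic cosines at the simple poles; no analytic difficulty arises beyond the parity argument at $a_0^-$.
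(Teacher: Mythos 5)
Your proof is correct and follows essentially the same route as the paper: classify the poles by locating zeros of $\cosh^2\tau$ and of $\chi^2+(1+\chi^2)\sinh^2\tau$ in the strip, then compute residues from short local expansions at each pole (the paper expands each factor to low order; you equivalently invoke $P/Q'$ at the simple poles and a parity argument at the double pole, which is the same observation that the $s^{-1}$ coefficient of the rational part vanishes). Worth noting: your computation silently corrects two typos in the lemma as printed — the third pole should be $a_2^-=-i\bigl(\pi-\arcsin\frac{\chi}{\sqrt{1+\chi^2}}\bigr)$ (with a minus sign, so that it actually lies in the strip $-\pi<\Im\tau<0$ and matches the exponent in the stated residue), and $\Res(f_1,a_0^-)=-\frac{2\Gamma}{A_2L_1^2}e^{-\pi\Gamma/(A_2L_1^2)}$ rather than $-\frac{2\Gamma}{A_2^2}e^{-\pi\Gamma/(A_2L_1^2)}$; both of your values agree with the formula for $\LL_1$ that the paper writes down immediately after the lemma.
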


Before proving the lemma, we proceed to deduce the value of the real
part of the Melnikov integral. The lemma shows that the sum of
formula~\eqref{eq:L1} is
\[
\sum \mathrm{Residues} = 
-\frac{2\Gamma}{A_2L_1^2}e^{- \frac{\pi\Gamma}{A_2L_1^2} }
+\frac{7+\chi^2}{2\sqrt{1+\chi^2}} e^{-
  \frac{2\Gamma}{A_2L_1^2} \arcsin\frac{\chi}{\sqrt{1+\chi^2}}}\left( 1 - e^{-
    \frac{2\Gamma}{A_2L_1^2} \pi} \right),
\]
hence the function $\LL_1$ introduced in \eqref{def:Lj} satisfies
\[
\LL_1=-\frac{2\pi i}{1+e^{-\frac{2\Gamma\pi}{A_2L_1^2}}} \frac{\wt
  C_1}{A_2}
\left(
  \begin{array}[c]{l}
    -\frac{2\Gamma}{A_2L_1^2}e^{- \frac{\pi\Gamma}{A_2L_1^2} }+\\
    +\frac{7+\chi^2}{2\sqrt{1+\chi^2}} e^{-
      \frac{2\Gamma}{A_2L_1^2} \arcsin\frac{\chi}{\sqrt{1+\chi^2}}}\left( 1 -
e^{-
        \frac{2\Gamma}{A_2L_1^2} \pi} \right)
    \end{array}
  \right). 
\]

\begin{proof}[Proof of Lemma \ref{lemma:ResiduesReal}]
The singularities are the solutions of
\[
 1+\sinh^2\tau=0\,\,\,\text{or either }\,\,\,\chi^2+(1+\chi^2)\sinh^2\tau=0.
\]
The first equation is just    $1+\sinh^2\tau=\cosh^2 \tau=0$ and the only possible solution in $-\pi<\Im \tau<0$ is $ a_0^-=-i\pi/2$. For the second equation, one can take $\tau=i\sigma$. Then, it is equivalent to 
\[
 \sin\sigma=\pm\frac{\chi}{\sqrt{1+\chi^2}},
\]
which gives the other two singularities. 

Now we compute the residues. We start by $a_0^-$. We compute the Laurent series of each term in $f$. From the fact that
 \[\sinh\tau=-i \left(1+\OO_2(\tau+i\pi/2)\right)\]
one can easily deduce  the following,
\[
 \begin{split}
  \frac{1}{\cosh^2\tau}&=-\frac{1}{(\tau+i\pi/2)^2}\left(1+\OO_2(\tau+i\pi/2)\right)\\
   7+6\sinh^2\tau&=1+\OO_2(\tau+i\pi/2)\\
\chi^2+(1+\chi^2)\sinh^2\tau&=-1+\OO_2(\tau+i\pi/2).
 \end{split}
\]
Therefore
\[
 \frac{\sinh\tau}{1+\sinh^2\tau}\cdot\frac{7+6\sinh^2\tau}{
\chi^2+(1+\chi^2)\sinh^2\tau}=-\frac{i}{(\tau+i\pi/2)^2}
\left(1+\OO_2(\tau+i\pi/2)\right).
\]
For the exponential, we know have that
\[
 e^{- i\frac{2\Gamma}{A_2L_1^2}\tau }=e^{- \frac{\pi\Gamma}{A_2L_1^2}}-
i\frac{2\Gamma}{A_2L_1^2}e^{- \frac{\pi\Gamma}{A_2L_1^2}
}(\tau+i\pi/2)+\OO_2(\tau+i\pi/2).
\]
From these two last expansions, we obtain the residue for $a_0^-=-i\pi/2$.

Now we compute the other two residues. We compute them at the same time. Note that for $i=1,2$,
\[
\sinh a_i^-=-i\frac{\chi}{\sqrt{1+\chi^2}}
\]
and 
\[\cosh a_1^-=\frac{1}{\sqrt{1+\chi^2}}, \quad \cosh
a_2^-=-\frac{1}{\sqrt{1+\chi^2}}.\]
Therefore
\[
 \begin{split}
   7+6\sinh^2\tau&=\frac{7+\chi^2}{1+\chi^2}+\OO_1(\tau-i\pi/2)\\
\chi^2+(1+\chi^2)\sinh^2\tau&=-2\chi i(\tau-a_1^-)+\OO_2(\tau-a_1^-)\\
\chi^2+(1+\chi^2)\sinh^2\tau&=+2\chi i(\tau-a_2^-)+\OO_2(\tau-a_2^-)
 \end{split}
\]
and we have also 
\[
\begin{split}
 e^{- i\frac{2\Gamma}{A_2L_1^2}\tau }&= e^{- \frac{2\Gamma}{A_2L_1^2}
\arcsin\frac{\chi}{\sqrt{1+\chi^2}}}+\OO_1(\tau-a_1^-)\\
 e^{- i\frac{2\Gamma}{A_2L_1^2}\tau }&= e^{-
\frac{2\Gamma}{A_2L_1^2}\left(\pi-\arcsin\frac{\chi}{\sqrt{1+\chi^2}}\right)
}+\OO_1(\tau-a_2^-).
\end{split}
\]
With all these expansions, one can easily deduce the last two residues.
\end{proof}

Analogously, the function $\LL_2$ introduced in \eqref{def:Lj}, satisfies
\[
 \LL_2=-\frac{C_2}{A_2} \frac{2\pi i}{1+e^{-\frac{2\Gamma\pi}{A_2L_1^2}}}
 \sum \mathrm{Residues}, 
\]
where the sum stands over all residues in $-\pi<\tau<0$ of
\[
\begin{split}
 f_2(\tau)=&\frac{e^{- i\frac{2\Gamma}{A_2L_1^2}\tau }}{\cosh
   \tau}\left[\frac{21}{5}-5\frac{\Ga^2}{L_1^2}+\right.\\ 
&\left.\frac{\sinh^2\tau}{\chi^2+(1+\chi^2)\sinh^2\tau}\left(\frac{\Ga^2}{
L_1^2\chi^2}
\frac{9\chi^2+\left(\frac{12}{5}+9\chi^2\right)\sinh^2\tau}{\cosh^2\tau}-\frac{3
}{5}\left(14-11\frac{\Ga^2}{L_1^2}\right)\right)\right].
\end{split}
\]

\begin{lemma}
  \label{lemma:ResiduesImaginary:2}%
   In the   region $-\pi<\Im \tau<0$, the function $f_2$ has the same singularities as $f_1$, given in Lemma \ref{lemma:ResiduesReal}. Moreover, the associated residues are given by
  \[
  \begin{cases}
    \Res\left(f_2,
a_0^-\right)=-\frac{i}{5}\left(21-8\frac{\Ga^2}{L_1^2}+24\frac{\Ga^4}{
A_2^2L_1^6\chi^2}\right)e^{-\frac{\pi\Gamma}{A_2L_1^2}}\\
    \Res\left(f_2, a_1^-\right)=
-i\frac{3}{5}\frac{\chi}{(1+\chi^2)^{3/2}}\left(11\frac{\Ga^2}{L_1^2}-7\right)
e^{-\frac{2\Gamma}{A_2^2} \arcsin\frac{\chi}{\sqrt{1+\chi^2}}}\\
    \Res\left(f_2, a_2^-\right)= 
-i\frac{3}{5}\frac{\chi}{(1+\chi^2)^{3/2}}\left(11\frac{\Ga^2}{L_1^2}-7\right)
e^{-\frac{2\Gamma}{A_2^2} \left(\pi-\arcsin\frac{\chi}{\sqrt{1+\chi^2}}\right)}
  \end{cases}
  \]
\end{lemma}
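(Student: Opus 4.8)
The strategy is the one already used for Lemma \ref{lemma:ResiduesReal}: locate the poles of $f_2$ in the strip $-\pi<\Im\tau<0$ and compute the corresponding residues by Laurent expansion. The only denominators occurring in $f_2$ are $\cosh\tau$ (the prefactor) and $\cosh^2\tau$, $\chi^2+(1+\chi^2)\sinh^2\tau$ (inside the bracket); hence the singularities in the strip are the zeros of $\cosh\tau$, i.e.\ $a_0^-=-i\pi/2$, and the zeros of $\chi^2+(1+\chi^2)\sinh^2\tau$, which are the same $a_1^-,a_2^-$ as for $f_1$ (and $a_1^-,a_2^-\neq a_0^-$ since $\chi/\sqrt{1+\chi^2}<1$). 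At $a_1^-$ and $a_2^-$ only $\chi^2+(1+\chi^2)\sinh^2\tau$ vanishes, so these are simple poles; at $a_0^-$ the prefactor $1/\cosh\tau$ gives a simple pole while the inner $1/\cosh^2\tau$ gives a double one, so $a_0^-$ is a \emph{triple} pole of $f_2$. This is the only feature that differs from the analysis of $f_1$ (where $a_0^-$ is merely a double pole), and it is what produces the new term proportional to $\Gamma^4/(A_2^2L_1^6\chi^2)$.

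For the pole $a_0^-$ I would set $u=\tau+i\pi/2$ and expand. Since $\sinh\tau=-i\cosh u$ and $\cosh\tau=-i\sinh u$, every rational expression in $\sinh^2\tau,\cosh^2\tau$ appearing in the bracket is an even function of $u$; in particular the bracket has a Laurent expansion $B_{-2}u^{-2}+B_0+O(u^2)$ with no odd and no $u^{-1}$ term, and $B_{-2}=\tfrac{12\Gamma^2}{5L_1^2\chi^2}$ comes from $\tfrac{\sinh^2\tau}{\chi^2+(1+\chi^2)\sinh^2\tau}\to 1$ and $\tfrac{9\chi^2+(\frac{12}{5}+9\chi^2)\sinh^2\tau}{\cosh^2\tau}\sim\tfrac{12}{5}u^{-2}$. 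Using $\tfrac1{\cosh\tau}=\tfrac iu-\tfrac{iu}6+O(u^3)$, the evenness of the bracket kills the $u^{-2}$ and $u^0$ terms and yields $\tfrac1{\cosh\tau}\cdot(\text{bracket})=iB_{-2}u^{-3}+i(B_0-\tfrac16 B_{-2})u^{-1}+O(u)$; multiplying by $e^{-i\frac{2\Gamma}{A_2L_1^2}\tau}=e^{-\frac{\pi\Gamma}{A_2L_1^2}}\bigl(1-i\tfrac{2\Gamma}{A_2L_1^2}u-\tfrac{2\Gamma^2}{A_2^2L_1^4}u^2+O(u^3)\bigr)$ and reading off the coefficient of $u^{-1}$, the constant term of the exponential contributes $i(B_0-\tfrac16 B_{-2})$ and its quadratic term reaches the $u^{-3}$ pole and contributes $iB_{-2}\cdot(-\tfrac{2\Gamma^2}{A_2^2L_1^4})$, the latter being exactly the $\Gamma^4$-term. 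Collecting and rewriting by means of the identity $\tfrac{\Gamma^2}{L_1^2\chi^2}=\tfrac32-\tfrac52\tfrac{\Gamma^2}{L_1^2}$ (which follows from the definition of $\chi$) gives $\Res(f_2,a_0^-)$.

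For $a_1^-$ and $a_2^-$ I would observe that the constant part $\tfrac{21}{5}-5\tfrac{\Gamma^2}{L_1^2}$ of the bracket, together with $1/\cosh\tau$ and the exponential, is regular there, so only the term carrying $\tfrac{\sinh^2\tau}{\chi^2+(1+\chi^2)\sinh^2\tau}$ contributes, with residue $\dfrac{e^{-i\frac{2\Gamma}{A_2L_1^2}a_j^-}}{\cosh a_j^-}\cdot\dfrac{\sinh^2 a_j^-}{(1+\chi^2)\sinh 2a_j^-}\cdot Q(a_j^-)$, where $Q(\tau)=\tfrac{\Gamma^2}{L_1^2\chi^2}\tfrac{9\chi^2+(\frac{12}{5}+9\chi^2)\sinh^2\tau}{\cosh^2\tau}-\tfrac35(14-11\tfrac{\Gamma^2}{L_1^2})$ is the inner parenthetical and $(1+\chi^2)\sinh 2\tau$ is the derivative of the vanishing denominator (nonzero at $a_j^-$, so the poles are genuinely simple). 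Using the values recorded in the proof of Lemma \ref{lemma:ResiduesReal}, namely $\sinh a_j^-=-i\chi/\sqrt{1+\chi^2}$ and $\cosh a_1^-=1/\sqrt{1+\chi^2}=-\cosh a_2^-$, one gets $\cosh^2 a_j^-=1/(1+\chi^2)$, $\sinh^2 a_j^-=-\chi^2/(1+\chi^2)$, $(1+\chi^2)\sinh 2a_1^-=-2i\chi=-(1+\chi^2)\sinh 2a_2^-$, and hence $\tfrac{9\chi^2+(\frac{12}{5}+9\chi^2)\sinh^2 a_j^-}{\cosh^2 a_j^-}=\tfrac{33\chi^2}{5}$, so that $Q(a_j^-)$ collapses to $\tfrac65(11\tfrac{\Gamma^2}{L_1^2}-7)$, the same for $j=1,2$; the sign change in $\cosh a_j^-$ cancels the one in $\sinh 2a_j^-$, which is exactly why the two residues differ only through the factor $e^{-i\frac{2\Gamma}{A_2L_1^2}a_j^-}$, i.e.\ through the $\arcsin$ in the exponent, obtained by evaluating on $a_1^-=-i\arcsin\tfrac{\chi}{\sqrt{1+\chi^2}}$ and $a_2^-=-i(\pi+\arcsin\tfrac{\chi}{\sqrt{1+\chi^2}})$.

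The one delicate point is the triple pole at $a_0^-$: it forces one to carry the Laurent expansions of all three rational blocks ($\tfrac1{\cosh\tau}$, $\tfrac{\sinh^2\tau}{\chi^2+(1+\chi^2)\sinh^2\tau}$, and $\tfrac{9\chi^2+(\frac{12}{5}+9\chi^2)\sinh^2\tau}{\cosh^2\tau}$) one order beyond the naive leading one and to combine them with the quadratic Taylor coefficient of the exponential, being careful that the evenness in $u$ of the bracket suppresses the would-be $u^{-2}$ and $u^0$ contributions of $\tfrac1{\cosh\tau}\cdot(\text{bracket})$. Everything else is routine residue calculus, entirely parallel to the proof of Lemma \ref{lemma:ResiduesReal}.
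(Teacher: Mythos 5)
Your route is the same in essence as the paper's---residue calculus, with Cauchy's formula applied to the function on the real line shifted by $-i\pi$, and Laurent expansions at the zeros of $\cosh\tau$ and of $\chi^2+(1+\chi^2)\sinh^2\tau$---but you organize the computation at the triple pole $a_0^-$ differently. The paper splits $f_2=h_1+h_2+h_3$ according to the three summands of the bracket, observes that $h_1,h_2$ have simple poles at $a_0^-$ while $h_3$ has a pole of order three (because of $1/\cosh^3\tau$), and expands each $h_i$ separately to the required order. You keep the bracket as a single even function of $u=\tau+i\pi/2$, using that it is a rational expression in $\sinh^2\tau=-\cosh^2u$ and $\cosh^2\tau=-\sinh^2u$; combined with the oddness of $1/\cosh\tau$ in $u$ this forces the Laurent series of $\frac1{\cosh\tau}\cdot(\text{bracket})$ to contain only odd powers $u^{-3},u^{-1},u,\dots$, so the residue reduces to two numbers, $B_{-2}$ and $B_0$, plus the second Taylor coefficient of the exponential reaching the $u^{-3}$ pole. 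That parity observation is a genuine simplification: it eliminates a whole family of cross-terms that the paper's split forces one to track, and it makes transparent \emph{why} only $e_0$ and $e_2$ from the exponential can contribute. For $a_1^-,a_2^-$ the two proofs coincide: you invoke the $g(a)/D'(a)$ form of the simple-pole residue, while the paper writes the same thing as a one-term Laurent expansion of the vanishing denominator.

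One caveat: you stop short of carrying the arithmetic through to the stated numbers (``collecting and rewriting \dots gives $\Res(f_2,a_0^-)$''). That step is the only place errors can hide, and it is not entirely innocuous: with your $B_{-2}=\frac{12\Gamma^2}{5L_1^2\chi^2}$ one also needs $B_0$, and both the coefficient $B_0-\tfrac16 B_{-2}$ and the power of $1+\chi^2$ appearing in the residues at $a_1^-,a_2^-$ deserve a careful check against the constants in the lemma before the computation can be considered closed. In particular, note that the individual residues $\Res(h_1,a_0^-)+\Res(h_2,a_0^-)+\Res(h_3,a_0^-)$ recorded in the paper's own proof do not obviously sum to the coefficient $21-8\,\Gamma^2/L_1^2$ stated in the lemma, so this is a place where some $\chi^2\leftrightarrow\Gamma^2/L_1^2$ rewriting (via $\Gamma^2/(L_1^2\chi^2)=\tfrac32-\tfrac52\Gamma^2/L_1^2$) must be applied with care; you should reproduce the numbers explicitly rather than take the bookkeeping for granted.
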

Hence, the function $\LL_2$ introduced in \eqref{def:Lj} satisfies
\[
 \LL_2=-\frac{C_2}{A_2} \frac{2\pi i}{1+e^{-\frac{2\Gamma\pi}{A_2L_1^2}}}
 \left(
   \begin{array}[c]{l}
     -\frac{i}{5}\left(21-8\frac{\Ga^2}{L_1^2}+
       24\frac{\Ga^4}{A_2^2L_1^6\chi^2}\right)e^{-\frac{\pi\Gamma}{A_2^2}}
     + \\  
     -i\frac{3}{5}\frac{\chi}{(1+\chi^2)^{3/2}}
     \left(11\frac{\Ga^2}{L_1^2}-7\right) e^{-\frac{2\Gamma}{A_2^2}
       \arcsin\frac{\chi}{\sqrt{1+\chi^2}}} + \\ 
     -i\frac{3}{5}\frac{\chi}{(1+\chi^2)^{3/2}}
     \left(11\frac{\Ga^2}{L_1^2}-7\right) e^{-\frac{2\Gamma}{A_2^2}
       \left(\pi-\arcsin\frac{\chi}{\sqrt{1+\chi^2}}\right)}
   \end{array}
 \right).
\]

\begin{proof}[Proof of Lemma \ref{lemma:ResiduesImaginary:2}]
  We use the expansions obtained in the proof of Lemma
  \ref{lemma:ResiduesReal}.  For the last two residues, it is
  enough to use also that
\[
\frac{\Ga^2}{L_1^2\chi^2}\frac{9\chi^2+\left(\frac{12}{5}+9\chi^2\right)\sinh^2\tau}{\cosh^2\tau}=\frac{33}{5}\frac{\Ga^2}{L_1^2}+\OO\left(\tau-a_i^-\right)
\]
for $i=1,2$. Then, using the expansions  of Lemma \ref{lemma:ResiduesReal}, we can deduce the two last residues.

Now we compute the residue at $\tau=a_0$. We split $f_2$ into three parts $f_2=h_1+h_2+h_3$ with 
\[
\begin{cases}
 h_1(\tau)=& \left(\frac{21}{5}-5\frac{\Ga^2}{L_1^2}\right)\frac{e^{-
i\frac{2\Gamma}{A_2L_1^2}\tau }}{\cosh
   \tau} \\
h_2(\tau)=&-\frac{3}{5}\left(14-11\frac{\Ga^2}{L_1^2}\right)\frac{\sinh^2\tau}{
\chi^2+(1+\chi^2)\sinh^2\tau}\frac{e^{- i\frac{2\Gamma}{A_2L_1^2}\tau }}{\cosh
   \tau}\\
h_3(\tau)=&\frac{\Ga^2}{L_1^2\chi^2}\frac{9\chi^2+\left(\frac{12}{5}
+9\chi^2\right)\sinh^2\tau}{\chi^2+(1+\chi^2)\sinh^2\tau}\sinh^2\tau\frac{e^{-
i\frac{2\Gamma}{A_2L_1^2}\tau }}{\cosh^ 3
   \tau}.
\end{cases}
\]
Functions $h_1$ and $h_2$ have a pole of order one at $\tau=a_0^-$ so
one can proceed as for the others singularies. However, $h_3$ has a
pole of order 3 and therefore, we need to compute the expansion up to order 3
of each term in $h_2$.

We use the expansions
\[
 \begin{cases}
   \sinh\tau&=-i-\frac{i}{2}y^2+\OO_4(y)\\
   \cosh\tau&=-iy-\frac{i}{6}y^3+\OO_5(y)
 \end{cases}
\]
where $y=\tau+i\frac{\pi}{2}$. We use this notation throughout the proof.

We start with $h_1$ and $h_2$. It can be easily seen that 
\[
 \begin{cases}
  \Res\left(h_1,
a_0^-\right)&=i\left(\frac{21}{5}-5\frac{\Ga^2}{L_1^2}\right)e^{-\frac{\pi\Gamma
}{A_2L_1^2}}\\
  \Res\left(h_2,
a_0^-\right)&=-i\frac{3}{5}\left(14-11\frac{\Ga^2}{L_1^2}\right)e^{-\frac{
\pi\Gamma}{A_2L_1^2}}
 \end{cases}
\]

For $h_3$ we use the following expasions
\[
\begin{split}
\frac{9\chi^2+\left(\frac{12}{5}+9\chi^2\right)\sinh^2\tau}{\chi^2+(1+\chi^2)\sinh^2\tau}\sinh^2\tau&=-\frac{12}{5}-\left(\frac{12}{5}+\frac{33}{5}\chi^2\right)y^2+\OO_4(y)\\
 \frac{1}{\cosh^3\tau}&=-\frac{i}{y^3}\left(1-\frac{y^2}{2}+\OO_4(y)\right)\\
 e^{- i\frac{2\Gamma}{A_2L_1^2}\tau
}&=e^{-\frac{\pi\Gamma}{A_2L_1^2}}\left(1-i\frac{2\Ga}{A_2L_1^2}y-\frac{2\Ga^2}{
A_2^2L_1^4}y^2+\OO_4(y)\right).
\end{split}
 \]
Putting together all these expansions one can deduce the residue
\[
   \Res\left(h_3,
a_0^-\right)=i\frac{\Ga^2}{L_1^2\chi^2}\left(\frac{33}{5}\chi^2-\frac{24}{5}
\frac{\Ga^2}{A_2^2L_1^4}\right)e^{-\frac{\pi\Gamma}{A_2L_1^2}}
\]
Now it only remains to add the three residues to obtain the residue of $f_2$ at $\tau=a_0^-$.
\end{proof}

Gathering what precedes, 
and letting
$$\alpha = \frac{\pi\Gamma}{A_2L_1^2}, \quad \hat\Gamma =
\frac{\Gamma}{L_1}, \quad \hat\chi = \frac{\chi}{\sqrt{1+\chi^2}}
\quad \mbox{and} \quad \beta = \frac{\alpha}{\pi} \arcsin\hat\chi,$$ we
obtain the following analytic expression:
\begin{equation}\label{def:Lplus}
\LL^+ = -\frac{2\pi i}{A_2 \left(1+e^{-2\alpha}\right)} \left(
  \begin{array}[c]{l}
    \wt C_1
\left(
  \begin{array}[c]{l}
    -\frac{2\alpha}{\pi} e^{-\alpha}+\\
    +\frac{7+\chi^2}{2\chi}\sin\frac{\beta\pi}{\alpha} e^{-2\beta}
    \left( 1 - e^{- 2\alpha} \right)
  \end{array}
\right) \\
+  C_2 
\left(
  \begin{array}[c]{l}
    \frac{1}{5}\left(21-8\hat\Ga^2+
      24 \frac{\hat\Gamma^2 \alpha^2}{\pi^2\chi^2}\right)e^{-\alpha} +
    \\   
    +\frac{3}{5}\frac{\chi}{(1+\chi^2)^{3/2}}
    \left(11\hat\Ga^2-7\right) e^{-2\beta} + \\ 
    +\frac{3}{5}\frac{\chi}{(1+\chi^2)^{3/2}}
    \left(11\hat\Ga^2-7\right) e^{-2(\alpha-\beta)} 
  \end{array}
\right)
\end{array}
\right).
\end{equation}
In order to check that $\LL^+$ is not consant, notice that,
asymptotically when $\Gamma$ tends to $0$ (and $L_1$ is kept
constant), $\alpha = O(\Gamma^{-2})$ while $\beta = O(\Gamma^{-1})$,
so that the term in $\tilde C_1\, e^{-2\beta}$ dominates the others:
for some $C_3(L_1)$ and $C_4(L_1)$ independant of $\Gamma$,
$$\LL^+ \sim \frac{C_3(L_1)}{\Gamma^2} \exp \left(-2 C_4(L_1)\Gamma +
  O(\Gamma^{-2})\right).$$ Since the dominant term of the right hand
side is a non constant function of $\Gamma$,
Proposition~\ref{prop:Melnikov} is proved.

\bigskip \emph{Funding: } J. F. is partially supported by the
French ANR (projet ANR-10-BLAN 0102 DynPDE). M. G. is partially
supported by the Spanish MINECO-FEDER Grant MTM2012-31714 and the
Catalan Grant 2014SGR504.

\emph{Conflict of Interest: } The authors declare that they have no
conflict of interest.

\bibliography{references} \bibliographystyle{alpha}

\end{document}